\numberwithin{equation}{section}
\numberwithin{figure}{section}
\theoremstyle{plain}
\newtheorem{thm}{\protect\theoremname}
  \theoremstyle{remark}
  \newtheorem*{acknowledgement*}{\protect\acknowledgementname}
  \theoremstyle{plain}
  \newtheorem{lem}[thm]{\protect\lemmaname}
  \theoremstyle{plain}
  \newtheorem{prop}[thm]{\protect\propositionname}
  \theoremstyle{remark}
  \newtheorem{rem}[thm]{\protect\remarkname}
  \theoremstyle{remark}
  \newtheorem{notation}[thm]{\protect\notationname}
  \theoremstyle{plain}
  \newtheorem{cor}[thm]{\protect\corollaryname}
  \theoremstyle{definition}
  \newtheorem{example}[thm]{\protect\examplename}
  \providecommand{\acknowledgementname}{Acknowledgement}
  \providecommand{\corollaryname}{Corollary}
  \providecommand{\examplename}{Example}
  \providecommand{\lemmaname}{Lemma}
  \providecommand{\notationname}{Notation}
  \providecommand{\propositionname}{Proposition}
  \providecommand{\remarkname}{Remark}
\providecommand{\theoremname}{Theorem}
\begin{document}
\subjclass[2010]{14J60, 14P99, 14R25, 14R05, 13C10, 55R20, 55R25}

\keywords{Real algebraic varieties; real forms; real structures; algebraic
and topological vector bundles; spheres.}

\thanks{This work received support from the French \textquotedbl{}Investissements
d\textquoteright Avenir\textquotedbl{} program, project ISITE-BFC
(contract ANR-lS-IDEX-OOOB)}

\author{Adrien Dubouloz}

\address{IMB UMR5584, CNRS, Univ. Bourgogne Franche-Comté, F-21000 Dijon,
France.}

\email{adrien.dubouloz@u-bourgogne.fr}

\author{Gene Freudenburg }

\address{Department of Mathematics Western Michigan University Kalamazoo,
Michigan 49008. }

\email{gene.freudenburg@wmich.edu }

\author{Lucy Moser-Jauslin}

\address{IMB UMR5584, CNRS, Univ. Bourgogne Franche-Comté, F-21000 Dijon,
France.}

\email{lucy.moser-jauslin@u-bourgogne.fr}

\dedicatory{In memory of Mariusz Koras }

\title[smooth rational varieties with infinitely many real forms ]{Algebraic vector bundles on the $2$-sphere and smooth rational
varieties with infinitely many real forms }
\begin{abstract}
We construct smooth rational real algebraic varieties of every dimension
$\geq4$ which admit infinitely many pairwise non-isomorphic real
forms. 
\end{abstract}

\maketitle

\section*{Introduction}

A classical problem in real algebraic geometry is the classification
of real forms of a given real algebraic variety $X$, that is, real
algebraic varieties $Y$ non isomorphic to $X$ but whose complexifications
$Y_{\mathbb{C}}$ are isomorphic to $X_{\mathbb{C}}$ as complex algebraic
varieties. For example, the smooth real affine algebraic surfaces
$\mathbb{S}^{2}=\left\{ x^{2}+y^{2}+z^{2}=1\right\} $ and $\mathcal{D}=\left\{ uv+z^{2}=1\right\} $
in $\mathbb{A}_{\mathbb{R}}^{3}$ have isomorphic complexifications,
an explicit isomorphism being simply given by the linear change of
complex coordinates $u=x+iy$ and $v=x-iy$, but are non isomorphic.
This follows for instance from the fact that the set of real points
of $\mathbb{S}^{2}$ is the usual euclidean $2$-sphere $S^{2}\subset\mathbb{R}^{3}$
whereas the set of real points of $\mathcal{D}$ is not compact for
the Euclidean topology. 

Examples of smooth real projective varieties admitting infinitely
many pairwise non-isomorphic real forms were only found very recently
successively by Lesieutre \cite{Le16} in dimension $\geq6$ and by
Dinh-Oguiso \cite{DO17} in every dimension $\geq2$. These are obtained
as a by-product of clever constructions of smooth complex projective
algebraic varieties defined over $\mathbb{R}$ with discrete but non
finitely generated automorphism groups containing infinitely many
conjugacy classes algebraic involutions. All their examples are non
geometrically rational and to our knowledge, the question of existence
of rational real algebraic varieties, projective or not, with infinitely
many real forms was left open. Our first main result explicitly fills
this gap for smooth real affine fourfolds:
\begin{thm}
\label{thm:MainTh-1}The smooth rational real affine fourfold $\mathbb{S}^{2}\times\mathbb{A}_{\mathbb{R}}^{2}$
has at least countably infinitely many pairwise non-isomorphic real
forms.
\end{thm}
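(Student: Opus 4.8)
The plan is to realize the required forms as total spaces of algebraic vector bundles of rank $2$ on $\mathbb{S}^{2}$. The starting observation is that if $p\colon E\to\mathbb{S}^{2}$ is an algebraic $\mathbb{R}$-vector bundle of rank $2$ whose complexification $E_{\mathbb{C}}\to\mathbb{S}_{\mathbb{C}}^{2}$ is the trivial bundle, then its total space $V(E)$ is a smooth rational affine fourfold over $\mathbb{R}$ satisfying $V(E)_{\mathbb{C}}\cong V(E_{\mathbb{C}})\cong\mathbb{S}_{\mathbb{C}}^{2}\times\mathbb{A}_{\mathbb{C}}^{2}\cong(\mathbb{S}^{2}\times\mathbb{A}^{2})_{\mathbb{C}}$. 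Thus every such $E$ produces a real form of $\mathbb{S}^{2}\times\mathbb{A}^{2}$, and it suffices to exhibit an infinite family $E_{n}$ of such bundles whose total spaces are pairwise non-isomorphic over $\mathbb{R}$.

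First I would construct the family. The real locus $\mathbb{S}^{2}(\mathbb{R})$ is the Euclidean sphere $S^{2}$, and oriented topological $\mathbb{R}^{2}$-bundles on $S^{2}$ are classified by their Euler number $n\in\mathbb{Z}=\pi_{1}(\mathrm{SO}_{2})$. The key input, which is the vector-bundle half of the paper, is that each such topological type is realized by an algebraic bundle $E_{n}$ on $\mathbb{S}^{2}$: one builds $E_{n}$ by an algebraic clutching construction on the two stereographic charts $\mathbb{S}^{2}\setminus\{N\}\cong\mathbb{A}^{2}\cong\mathbb{S}^{2}\setminus\{S\}$, the transition cocycle being a polynomial rotation of degree $n$ on the overlap, so that $E_{n}(\mathbb{R})$ has Euler number $n$.

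The crucial point is that each complexification $E_{n,\mathbb{C}}$ is \emph{algebraically} trivial on the affine quadric $\mathbb{S}_{\mathbb{C}}^{2}=\{uv+z^{2}=1\}$, where $u=x+iy$ and $v=x-iy$. Topological triviality is automatic: a compatible complex structure makes $E_{n}(\mathbb{R})$ a complex line bundle $L$ with $c_{1}(L)=n$, so $E_{n}(\mathbb{R})\otimes_{\mathbb{R}}\mathbb{C}\cong L\oplus\overline{L}$ has $c_{1}=0$ and is trivial among rank-$2$ bundles on $S^{2}$. Algebraic triviality is subtler, and is precisely where the real-versus-complex phenomenon enters, exactly as for $\mathbb{S}^{2}$ versus $\mathcal{D}$ in the introduction: over $\mathbb{C}$ the overlap becomes a torus $\mathbb{G}_{m}$ on which the degree-$n$ rotation diagonalizes to $\mathrm{diag}(t^{n},t^{-n})$, and one must check that the resulting cocycle extends, i.e.\ that the bundle acquires an algebraic global frame on the filled-in affine quadric. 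I expect this triviality to be the main obstacle, and I would settle it by exhibiting an explicit algebraic frame of $E_{n,\mathbb{C}}$ in the coordinates $u,v,z$.

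Finally I would distinguish the total spaces. By construction $V(E_{n})(\mathbb{R})$ is the total space of the real rank-$2$ bundle $E_{n}(\mathbb{R})\to S^{2}$ of Euler number $n$, an open $4$-manifold whose end retracts onto the associated circle bundle over $S^{2}$; for $n\neq0$ this is a lens space with fundamental group $\mathbb{Z}/|n|\mathbb{Z}$, whereas for $n=0$ it is $S^{2}\times S^{1}$. Hence the fundamental group at infinity of $V(E_{n})(\mathbb{R})$ equals $\mathbb{Z}/|n|\mathbb{Z}$, a homeomorphism invariant. Since an isomorphism of real algebraic varieties induces a homeomorphism of real loci, $V(E_{n})\cong V(E_{m})$ forces $|n|=|m|$; in particular each $V(E_{n})$ with $n\geq1$ is non-isomorphic to $\mathbb{S}^{2}\times\mathbb{A}^{2}=V(E_{0})$, whose real end has infinite fundamental group, and is therefore a genuine real form. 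Letting $n$ run through $\{1,2,3,\dots\}$ then yields countably infinitely many pairwise non-isomorphic real forms, which proves the theorem.
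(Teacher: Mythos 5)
Your overall strategy coincides with the paper's (total spaces of algebraic rank-$2$ bundles on $\mathbb{S}^{2}$ modelling the bundles $\mathcal{O}_{\mathbb{CP}^{1}}(n)$, trivial complexifications, then distinguish the total spaces), but the step you explicitly defer is precisely the crux of the theorem, and it is a genuine gap. The algebraic triviality of $E_{n,\mathbb{C}}$ on $\mathbb{S}_{\mathbb{C}}^{2}$ is what makes the $V(E_{n})$ real forms of $\mathbb{S}^{2}\times\mathbb{A}_{\mathbb{R}}^{2}$ at all, and it is not a routine cocycle verification that one can wave at with ``I would exhibit an explicit frame''. Indeed $\mathrm{Pic}(\mathbb{S}_{\mathbb{C}}^{2})\simeq\mathbb{Z}$, generated by a ruling line $C=\{x+iy=1-z=0\}$, and the complexified bundle is $\mathcal{O}(nC)\oplus\mathcal{O}(-nC)$, a direct sum of two \emph{nontrivial} line bundles; in particular your diagonal cocycle $\mathrm{diag}(t^{n},t^{-n})$ is not a coboundary of diagonal matrices, so no trivialization compatible with the two summands exists and any global frame must mix them. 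The paper supplies exactly the missing input, in two ways: abstractly, the determinant of $V_{n,\mathbb{C}}$ is trivial and Murthy's theorem guarantees that every rank-$2$ algebraic bundle on $\mathbb{S}_{\mathbb{C}}^{2}$ splits off a trivial factor, hence equals its determinant plus a trivial line bundle; explicitly, trivializing matrices $M_{n,\pm}$ over the cover $\{1\pm z\neq0\}$ are built from a Bezout identity $(1+z)^{n}P_{n}(z)+(1-z)^{n}Q_{n}(z)=1$. Some input of this kind is unavoidable, and your proposal contains none.

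There is also a concrete error in your construction step: $\mathbb{S}^{2}\setminus\{N\}$ is \emph{not} isomorphic to $\mathbb{A}_{\mathbb{R}}^{2}$ --- it is not even affine, being the complement of a point in an affine surface. Stereographic projection is a morphism only off the curve $\{z=1\}$ (a pair of conjugate lines whose only real point is $N$), and it identifies $\mathbb{S}^{2}\setminus\{z=1\}$ with $\mathbb{A}_{\mathbb{R}}^{2}\setminus\{u^{2}+v^{2}+1=0\}$, not with $\mathbb{A}_{\mathbb{R}}^{2}$. The clutching must therefore be done over the opens $U_{\pm}=\{1\pm z\neq0\}$, with cocycle multiplication by $(x+iy)^{n}$ viewed as a $2\times2$ matrix with real polynomial entries, whose determinant $(x^{2}+y^{2})^{n}=((1-z)(1+z))^{n}$ is invertible exactly on $U_{+}\cap U_{-}$; this fix works and does give a bundle restricting on $S^{2}$ to the Euler-number-$n$ bundle. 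By contrast, your final step is correct as it stands: distinguishing the real loci by the fundamental group of the end (circle bundles over $S^{2}$, i.e.\ lens spaces) is essentially the paper's own remark, which uses $H_{1}^{\infty}(\mathcal{O}_{\mathbb{CP}^{1}}(n);\mathbb{Z})\simeq\mathbb{Z}/n\mathbb{Z}$; it is weaker than the paper's Lemma \ref{lem:non-iso-total-spaces} (any abstract isomorphism of total spaces forces an isomorphism of bundles, via Eakin--Heinzer and Barge--Ojanguren), but it suffices for Theorem \ref{thm:MainTh-1}. So the proposal's architecture is sound and its last third is a legitimate simplification, but as written it omits the one nontrivial algebraic fact the whole theorem rests on.
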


In contrast with the examples found by Lesieutre and Dinh-Oguiso,
which rely on constructions of special classes of complex projective
varieties by techniques of birational geometry, ours are inspired
by basic results on the classification of topological vector bundles
on the real sphere $S^{2}\subset\mathbb{R}^{3}$. Our construction
can indeed be interpreted as a sort of ``algebraization'' of the
property that the complexification $E\otimes_{\mathbb{R}}\mathbb{C}$
of any topological real vector bundle $\pi:E\rightarrow S^{2}$ of
rank $2$ on $S^{2}$ is isomorphic, as a topological real vector
bundle of rank $4$, to the trivial bundle $S^{2}\times\mathbb{R}^{4}$.
More precisely, we show that the topological real vector bundles of
rank $2$ on $S^{2}$, which are nothing but the underlying real vector
bundles of the complex line bundles $\mathcal{O}_{\mathbb{CP}^{1}}(n)$,
$n\geq0$, over $\mathbb{CP}^{1}\simeq S^{2}$, admit algebraic models
in the form of algebraic vector bundles $p_{n}:V_{n}\rightarrow\mathbb{S}^{2}$
of rank $2$ on $\mathbb{S}^{2}$ with pairwise non-isomorphic total
spaces, whose complexifications $p_{\mathbb{C}}:V_{n,\mathbb{C}}\rightarrow\mathbb{S}_{\mathbb{C}}^{2}$
are all isomorphic to the trivial bundle $\mathbb{S}_{\mathbb{C}}^{2}\times\mathbb{A}_{\mathbb{C}}^{2}$. 

It is worth noticing that by a result of Kambayashi \cite{Kam75},
$\mathbb{A}_{\mathbb{R}}^{2}$ has no nontrivial real form. One can
check along the same lines using the fact that similarly as to $\mathrm{Aut}(\mathbb{A}_{\mathbb{C}}^{2})$,
the automorphism group $\mathrm{Aut}(\mathbb{S}_{\mathbb{C}}^{2})$
of $\mathbb{S}_{\mathbb{C}}^{2}\simeq\mathcal{D}_{\mathbb{C}}$ has
a structure of a free product of two subgroups amalgamated along their
intersection \cite{BD11, ML90}, that $\mathcal{D}$ is the unique
nontrivial real form of $\mathbb{S}^{2}$. So while $\mathbb{A}_{\mathbb{R}}^{2}$
and $\mathbb{S}^{2}$ both have finitely many real forms, the total
spaces of the algebraic vector bundles $p_{n}:V_{n}\rightarrow\mathbb{S}^{2}$
provide an infinite countable family of real forms of $\mathbb{S}^{2}\times\mathbb{A}_{\mathbb{R}}^{2}$
which are by construction pairwise locally isomorphic over $\mathbb{S}^{2}$,
but globally pairwise non-isomorphic as real algebraic varieties.
In contrast, reminiscent of the fact that for every $r\geq3$ there
exists a unique nontrivial topological real vector bundle of rank
$r$ on $S^{2}$, it turns out that the varieties $V_{n}\times\mathbb{A}_{\mathbb{R}}^{r-2}$,
$n\geq0$, give rise to a unique class of nontrivial real form of
$\mathbb{S}^{2}\times\mathbb{A}_{\mathbb{R}}^{r}$ (see Corollary
\ref{cor:Stable-Iso} below). 

Our construction thus does not directly yield higher dimensional families
of examples by simply taking product with affine spaces. Nevertheless,
a suitable adaptation of the technique used by Dinh-Oguiso \cite{DO17},
consisting in our situation of taking products of the $V_{n}$ with
well-chosen real rational affine varieties of log-general type, allows
us to derive the following general existence result: 
\begin{thm}
\label{thm:MainTh-2}For every $d\geq4$, there exist smooth rational
real affine varieties of dimension $d$ which have at least countably
infinitely many pairwise non-isomorphic real forms. 
\end{thm}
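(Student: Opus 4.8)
The plan is to derive the statement for $d\ge 5$ from Theorem \ref{thm:MainTh-1} by taking products of the varieties $V_n$ with a single auxiliary factor that is rigid enough to be recognised inside any product; the case $d=4$ is Theorem \ref{thm:MainTh-1} itself. Fixing $d\ge 5$ and $k=d-4\ge 1$, I would choose $Z=(\mathbb{P}^1_{\mathbb{R}}\setminus\{0,1,\infty\})^{k}$, a smooth rational real affine variety of dimension $k$ of log-general type, with $Z(\mathbb{R})\ne\emptyset$ and standard real structure $\tau$, and set $X_n=V_n\times_{\mathbb{R}}Z$. Each $X_n$ is then smooth, affine, rational and of dimension $d$. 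Since all the complexifications $V_{n,\mathbb{C}}$ are isomorphic to $\mathbb{S}^2_{\mathbb{C}}\times\mathbb{A}^2_{\mathbb{C}}$, one has $X_{n,\mathbb{C}}\cong\mathbb{S}^2_{\mathbb{C}}\times\mathbb{A}^2_{\mathbb{C}}\times Z_{\mathbb{C}}$ independently of $n$, so all the $X_n$ are real forms of $X_0$. Everything then reduces to showing that the $X_n$ are pairwise non-isomorphic over $\mathbb{R}$.

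For this I would write $V_{\mathbb{C}}=\mathbb{S}^2_{\mathbb{C}}\times\mathbb{A}^2_{\mathbb{C}}$, $W_{\mathbb{C}}=V_{\mathbb{C}}\times Z_{\mathbb{C}}$, and prove that the projection $\mathrm{pr}_Z\colon W_{\mathbb{C}}\rightarrow Z_{\mathbb{C}}$ is intrinsic to $W_{\mathbb{C}}$. The two ingredients are that $Z_{\mathbb{C}}$ admits no non-constant morphism from $\mathbb{A}^1_{\mathbb{C}}$ (a map to $\mathbb{P}^1_{\mathbb{C}}\setminus\{0,1,\infty\}=\mathbb{A}^1_{\mathbb{C}}\setminus\{0,1\}$ is a polynomial $f$ with $f$ and $f-1$ nowhere vanishing, hence constant), so that every affine line in $W_{\mathbb{C}}$ is vertical for $\mathrm{pr}_Z$; and that $V_{\mathbb{C}}$ is chain-connected by affine lines (using the two rulings of the quadric $\mathbb{S}^2_{\mathbb{C}}$ together with the lines of $\mathbb{A}^2_{\mathbb{C}}$), so that each fibre $V_{\mathbb{C}}\times\{z\}$ is a single class of the equivalence relation generated by affine lines. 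The fibres of $\mathrm{pr}_Z$ are thus exactly these classes, a description independent of the product decomposition. Since regular and anti-regular automorphisms of $W_{\mathbb{C}}$ both send affine lines to affine lines, they permute these fibres and hence are compatible with $\mathrm{pr}_Z$ up to an automorphism of the base $Z_{\mathbb{C}}$.

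Granting this, suppose $\phi\colon X_m\rightarrow X_n$ is a real isomorphism with $m\ne n$. Its complexification $\phi_{\mathbb{C}}\in\mathrm{Aut}_{\mathbb{C}}(W_{\mathbb{C}})$ satisfies $\phi_{\mathbb{C}}\circ(\mu_m\times\tau)=(\mu_n\times\tau)\circ\phi_{\mathbb{C}}$, where $\mu_n$ is the real structure on $V_{\mathbb{C}}$ attached to $V_n$. By the previous step $\phi_{\mathbb{C}}$ induces an automorphism $\psi$ of $Z_{\mathbb{C}}$ with $\mathrm{pr}_Z\circ\phi_{\mathbb{C}}=\psi\circ\mathrm{pr}_Z$; comparing base actions shows $\psi\circ\tau=\tau\circ\psi$, so $\psi$ is defined over $\mathbb{R}$. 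Choosing $z_0\in Z(\mathbb{R})$, the map $\phi_{\mathbb{C}}$ restricts to an isomorphism of the fibres over $z_0$ and $\psi(z_0)\in Z(\mathbb{R})$ intertwining $\mu_m$ and $\mu_n$, that is to a real isomorphism $V_m\cong V_n$. This contradicts the pairwise non-isomorphism of the $V_n$ in Theorem \ref{thm:MainTh-1}, so the $X_n$ furnish countably infinitely many real forms in dimension $d$.

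The hard part will be the intrinsic characterisation of $\mathrm{pr}_Z$: making rigorous that the affine-line equivalence relation recovers the projection as a genuine morphism, canonical up to automorphisms of the base. This is the affine, log-general-type analogue of the rigidity of general-type factors used by Dinh--Oguiso \cite{DO17}, and it is precisely what forces every automorphism of the complexification to preserve the splitting into an affine-ruled part and a log-general-type part. The remaining points are routine: that $V_{\mathbb{C}}$ is chain-connected by affine lines, that finite products of thrice-punctured lines are rational, affine and of log-general type with no non-constant affine lines, and that $Z$ has real points. One should also note that the finite automorphisms of $Z$ permuting punctures and factors create no unwanted identifications, since the final contradiction is extracted fibrewise over a real point of $Z$, where the fibre real structures are exactly the $\mu_n$.
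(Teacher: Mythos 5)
Your proposal is correct, but it takes a genuinely different route from the paper's proof. The paper chooses as auxiliary factor the complement $X$ of a smooth real hypersurface of degree $>k+1$ in $\mathbb{P}_{\mathbb{R}}^{k}$, so that $X_{\mathbb{C}}$ is of log-general type \emph{and} $\mathrm{Aut}(X_{\mathbb{C}})$ is trivial; given an isomorphism $h\colon V_{n}\times X\stackrel{\sim}{\rightarrow}V_{m}\times X$, it applies the Iitaka--Fujita strong cancellation theorem \cite{IiFu77} to the $\mathbb{A}^{1}$-ruled factor $\mathbb{S}_{\mathbb{C}}^{2}\times\mathbb{A}_{\mathbb{C}}^{2}$ and the log-general-type factor $X_{\mathbb{C}}$ to obtain $\xi\in\mathrm{Aut}(X_{\mathbb{C}})$ with $\mathrm{pr}_{X_{\mathbb{C}}}\circ h_{\mathbb{C}}=\xi\circ\mathrm{pr}_{X_{\mathbb{C}}}$, forces $\xi=\mathrm{id}$ by rigidity, concludes that $h$ is an isomorphism of schemes over $X$, and restricts over a real point. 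You replace both inputs: the descent of $h_{\mathbb{C}}$ to the base comes from your elementary affine-line argument (no non-constant $\mathbb{A}_{\mathbb{C}}^{1}\rightarrow Z_{\mathbb{C}}$, plus $\mathbb{A}^{1}$-chain-connectedness of $\mathbb{S}_{\mathbb{C}}^{2}\times\mathbb{A}_{\mathbb{C}}^{2}$) rather than from a quoted cancellation theorem, and instead of a rigid base you use that the induced automorphism $\psi$ of $Z_{\mathbb{C}}$ commutes with $\tau$, hence maps $Z(\mathbb{R})$ into $Z(\mathbb{R})$, which is all the fibrewise endgame needs. The trade-off: the paper's argument is shorter (modulo the black box of \cite{IiFu77}) and works for any rigid log-general-type factor, while yours is self-contained and tolerates bases with many automorphisms---your $Z$ has $\mathrm{Aut}(Z_{\mathbb{C}})$ containing $S_{3}$ already for $k=1$, so it could not be used in the paper's argument---at the cost of needing the strictly stronger property that $Z_{\mathbb{C}}$ contains no affine lines (log-general type alone does not imply this: $\mathbb{P}_{\mathbb{C}}^{2}$ minus a smooth quartic with a hyperflex is of log-general type but contains the hyperflex tangent line minus one point). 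Two details to settle when writing it up: the step you flag as ``the hard part'' is in fact immediate, since once fibres of $\mathrm{pr}_{Z}$ are known to map to fibres, $\psi=\mathrm{pr}_{Z}\circ\phi_{\mathbb{C}}\circ\iota_{v_{0}}$ for the section $\iota_{v_{0}}\colon z\mapsto(v_{0},z)$ exhibits $\psi$ as a morphism, and its inverse arises the same way from $\phi_{\mathbb{C}}^{-1}$; and in proving chain-connectedness of $\mathbb{S}_{\mathbb{C}}^{2}\simeq(\mathbb{P}_{\mathbb{C}}^{1}\times\mathbb{P}_{\mathbb{C}}^{1})\setminus\Gamma$ via the two rulings, note that for special pairs of points both two-step ruling chains run into $\Gamma$, so you must allow chains of length three (or first move within the $\mathbb{A}_{\mathbb{C}}^{2}$ factor).
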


The article is organized as follows. The first section contains a
short review of the classical correspondence between quasi-projective
real algebraic varieties and quasi-projective complex varieties endowed
with a real structure as well as a recollection on Euclidean topologies
of real and complex algebraic varieties. Section 2 is devoted to the
construction of algebraic models of topological real vector bundles
over the $2$-sphere $S^{2}\subset\mathbb{R}^{3}$. The existence
of such models was known after successive works of Fossum \cite{Fo69}
and Moore \cite{Mo71} and, later on, of Swan \cite{Sw93}, but we
give a new geometric construction in the framework of complex varieties
with real structures which we find more transparent. Theorem \ref{thm:MainTh-1}
is then established in Section 3. Section 4 contains the proof of
Theorem \ref{thm:MainTh-2} and a complement to Theorem \ref{thm:MainTh-1}
consisting of explicit formulas for the real structures on $\mathbb{S}_{\mathbb{C}}^{2}\times\mathbb{A}_{\mathbb{C}}^{2}$
corresponding to the real algebraic vector bundles $p_{n}:V_{n}\rightarrow\mathbb{S}^{2}$. 
\begin{acknowledgement*}
The main ideas of the present article were discussed between the authors
at the occasion of the conference ``Algebraic Geometry - Mariusz
Koras in memoriam'' held at the IMPAN, Warsaw in May 2018. We are
grateful to the organizers of the conference for giving us the opportunity
to have such discussions and to the IMPAN for its support and hospitality. 
\end{acknowledgement*}

\section{Preliminaries}

In this article, the term $\mathbf{k}$-variety will always refer
to a geometrically integral quasi-projective scheme $X$ of finite
type over a base field $\mathbf{k}$ of characteristic zero. A morphism
of $\mathbf{k}$-varieties is a morphism of $\mathbf{k}$-schemes.
In the sequel, $\mathbf{k}$ will be equal to either $\mathbb{R}$
or $\mathbb{C}$, and we will say that $X$ is a real, respectively
complex, algebraic variety. To fix the notation, we let $c:\mathrm{Spec}(\mathbb{C})\rightarrow\mathrm{Spec}(\mathbb{R})$
be the \'etale double cover induced by the inclusion $\mathbb{R}\hookrightarrow\mathbb{C}=\mathbb{R}[i]/(i^{2}+1)$
and we let $\tau:\mathrm{Spec}(\mathbb{C})\rightarrow\mathrm{Spec}(\mathbb{C})$,
$i\mapsto-i$ be the usual complex conjugation. 

\subsection{Complex varieties with real structures}

Recall \cite{BS64} and \cite[Expos\'e VIII]{SGA1} that \'etale
descent for the Galois cover $c:\mathrm{Spec}(\mathbb{C})\rightarrow\mathrm{Spec}(\mathbb{R})$
provides an equivalence between the category of quasi-projective real
algebraic varieties and the category of complex algebraic varieties
equipped with a descent datum with respect to $c$. Such a descent
datum on a quasi-projective complex algebraic variety $f:V\rightarrow\mathrm{Spec}(\mathbb{C})$
is in turn uniquely determined by an isomorphism of $\mathbb{R}$-schemes
$\sigma:V\rightarrow V$ such $f\circ\sigma=\tau\circ f$ and that
satisfies the cocycle relation $\sigma^{2}=\mathrm{id}_{V}$. In other
words, $\sigma$ is an anti-regular involution of $V$, usually referred
to as a \emph{real structure} on $V$. 

For every real algebraic variety $X$, the \emph{complexification}
$X_{\mathbb{C}}=X\times_{\mathrm{Spec}(\mathbb{R})}\mathrm{\mathrm{Spec}(\mathbb{C})}$
of $X$ is canonically endowed with a real structure $\sigma_{X}=\mathrm{id}_{X}\times\tau$.
Conversely, for every complex variety $f:V\rightarrow\mathrm{Spec}(\mathbb{C})$
endowed with a real stucture $\sigma$, the ``quotient'' $q:V\rightarrow V/\langle\sigma\rangle$
exists in the category of schemes and the structure morphism $f:V\rightarrow\mathrm{Spec}(\mathbb{C})$
descends to a morphism $\overline{f}:V/\langle\sigma\rangle\rightarrow\mathrm{Spec}(\mathbb{R})=\mathrm{Spec}(\mathbb{C})/\langle\tau\rangle$
making $V/\langle\sigma\rangle$ into a real algebraic variety $X$
such that $V\simeq X_{\mathbb{C}}$. 

Two real structures $\sigma$ and $\sigma'$ on a same complex algebraic
variety $f:V\rightarrow\mathrm{Spec}(\mathbb{C})$ are called equivalent
if the associated real algebraic varieties $V/\langle\sigma\rangle$
and $V/\langle\sigma'\rangle$ are isomorphic, which holds if and
only if there exists an automorphism of complex algebraic varieties
$h:V\rightarrow V$ such that $\sigma'\circ h=h\circ\sigma$. A \emph{real
form} of a real algebraic variety $X$ is a real algebraic variety
$X'$ such that the complex varieties $X_{\mathbb{C}}$ and $X'_{\mathbb{C}}$
are isomorphic. Galois descent then provides a one-to-one correspondence
between isomorphism classes of real forms of a given real variety
$X$ and equivalence classes of real structures on its complexification
$X_{\mathbb{C}}$. 

\subsection{\label{subsec:Galois-descent-VB}Galois descent for vector bundles }

Given a real algebraic variety $X$, \'etale descent for the Galois
cover $c:\mathrm{Spec}(\mathbb{C})\rightarrow\mathrm{Spec}(\mathbb{R})$
also provides an equivalence between the category of quasi-coherent
$\mathcal{O}_{X}$-modules and the category of pairs $(\mathcal{F},\varphi)$
consisting of a quasi-coherent $\mathcal{O}_{X_{\mathbb{C}}}$-module
$\mathcal{F}$ and an isomorphism $\varphi:\mathcal{F}\stackrel{\sim}{\rightarrow}\sigma_{X}^{*}\mathcal{F}$
of $\mathcal{O}_{X_{\mathbb{C}}}$-modules such that $\varphi^{2}=(\sigma_{X}^{*}\varphi)\circ\varphi$. 

Since $\sigma_{X}$ is an involution, for every quasi-coherent $\mathcal{O}_{X_{\mathbb{C}}}$-module
$\mathcal{E}$, we have $\sigma_{X}^{*}(\sigma_{X}^{*}\mathcal{E})\simeq\mathcal{E}$.
Letting $\mathcal{F}=\mathcal{E}\oplus\sigma_{X}^{*}\mathcal{E}$,
the isomorphism $\varphi:\mathcal{F}=\mathcal{E}\oplus\sigma_{X}^{*}\mathcal{E}\rightarrow\sigma_{X}^{*}\mathcal{F}=\sigma_{X}^{*}\mathcal{E}\oplus\mathcal{E}$
exchanging the two factors of the direct sum satisfies $\varphi^{2}=(\sigma_{X}^{*}\varphi)\circ\varphi$.
We denote the corresponding quasi-coherent $\mathcal{O}_{X}$-module
by $\mathcal{E}_{\mathbb{R}}$. 

In the sequel, we essentially use this construction in the special
case where $\mathcal{E}$ is the locally free $\mathcal{O}_{X_{\mathbb{C}}}$-module
of germs of sections of a vector bundle $p:E=\mathrm{Spec}(\mathrm{Sym}^{\cdot}\mathcal{E}^{\vee})\rightarrow X_{\mathbb{C}}$
on $X_{\mathbb{C}}$ of finite rank $r$. In this geometric context,
the isomorphism $\varphi$ can be interpreted as endowing the rank
$2r$ vector bundle $\rho=p\oplus\sigma_{X}^{*}p:E\oplus\sigma_{X}^{*}E\rightarrow X_{\mathbb{C}}$
with a lift of $\sigma_{X}$ to a real structure $\tilde{\sigma}:E\oplus\sigma_{X}^{*}E\rightarrow E\oplus\sigma_{X}^{*}E$
which is linear on the fibers of $\rho$, in such a way that $\rho$
descends to a vector bundle 
\[
p_{\mathbb{R}}:E_{\mathbb{R}}:=\mathrm{Spec}(\mathrm{Sym}^{\cdot}\mathcal{E}_{\mathbb{R}}^{\vee})\simeq(E\oplus\sigma_{X}^{*}E)/\langle\tilde{\sigma}\rangle\rightarrow X\simeq X_{\mathbb{C}}/\langle\sigma_{X}\rangle
\]
of rank $2r$ on $X$. 

\subsection{Euclidean topologies}

Recall that the set $X(\mathbb{R})$ of real points of a real algebraic
variety $X$ is endowed in a natural way with the Euclidean topology,
locally induced on each affine open subset by the usual Euclidean
topology on the set $\mathbb{A}_{\mathbb{R}}^{n}(\mathbb{R})\simeq\mathbb{R}^{n}$.
The so-constructed topology on $X(\mathbb{R})$ is well-defined and
independent of the choices made \cite[Lemme 1 and Proposition 2]{Se55}.
Similarly, the set of complex points $V(\mathbb{C})$ of a complex
algebraic variety $V$ is endowed with the Euclidean topology locally
induced by that on $\mathbb{A}_{\mathbb{C}}^{n}(\mathbb{C})\simeq\mathbb{C}^{n}\simeq\mathbb{R}^{2n}$.
If $X$ (resp. $V$) is smooth, then $X(\mathbb{R})$ (resp. $V(\mathbb{C})$)
can be further equipped with a natural structure of smooth manifold
locally inherited from that on $\mathbb{R}^{n}$ (resp. $\mathbb{R}^{2n}$).
Every morphism $h\colon X\rightarrow Y$ of smooth real algebraic
varieties induces a continuous map $h(\mathbb{R})\colon X(\mathbb{R})\rightarrow Y(\mathbb{R})$
for the Euclidean topologies, which is a diffeomorphism when $f$
is an isomorphism. Similarly, a morphism of complex varieties $h:V\rightarrow W$
induces a continuous map $h(\mathbb{C}):V(\mathbb{C})\rightarrow W(\mathbb{C})$
which is a diffeomorphism when $f$ is an isomorphism. 

If $V$ is a smooth complex variety equipped with a real structure
$\sigma$, then $\sigma$ induces a smooth involution of $V(\mathbb{C})$.
The set $V(\mathbb{C})^{\sigma}$ of fixed points of $\sigma$ is
called the \emph{real locus} of $(V,\sigma)$. The quotient map $q:V\rightarrow X=V/\langle\sigma\rangle$
restricts to a diffeomorphism between $V(\mathbb{C})^{\sigma}$ endowed
with the induced smooth structure and the set of real points $X(\mathbb{R})$
of $X$ endowed with its smooth structure. \\

Let $X$ be a smooth real algebraic variety, let $p:E\rightarrow X_{\mathbb{C}}$
be a vector bundle of rank $r$ on $X_{\mathbb{C}}$ and let $p_{\mathbb{R}}:E_{\mathbb{R}}\rightarrow X$
be the vector bundle of rank $2r$ on $X$ descended from the the
rank $2r$ vector bundle $p\oplus\sigma_{X}^{*}p:E\oplus\sigma^{*}E\rightarrow X_{\mathbb{C}}$
as in \S \ref{subsec:Galois-descent-VB}. Then $p_{\mathbb{R}}(\mathbb{R}):E_{\mathbb{R}}(\mathbb{R})\rightarrow X(\mathbb{R})$
is a topological real vector bundle of rank $2r$ on the smooth manifold
$X(\mathbb{R})$. On the other hand, the restriction of $p(\mathbb{C}):E(\mathbb{C})\rightarrow X_{\mathbb{C}}(\mathbb{C})$
to the real locus $X_{\mathbb{C}}(\mathbb{C})^{\sigma_{X}}$ of $X_{\mathbb{C}}(\mathbb{C})$
defines through the diffeomorphism $X_{\mathbb{C}}(\mathbb{C})^{\sigma_{X}}\stackrel{\sim}{\rightarrow}X(\mathbb{R})$
a topological complex vector bundle $\tilde{p}:\tilde{E}\rightarrow X(\mathbb{R})$
of rank $r$ on $X(\mathbb{R})$, hence, forgetting about the complex
structure, a topological real vector bundle of rank $2r$. 
\begin{lem}
\label{lem:Top-Down-Bundle} With the notation above, $\tilde{p}:\tilde{E}\rightarrow X(\mathbb{R})$
and $p_{\mathbb{R}}(\mathbb{R}):E_{\mathbb{R}}(\mathbb{R})\rightarrow X(\mathbb{R})$
are isomorphic topological real vector bundles of rank $2r$ on $X(\mathbb{R})$. 
\end{lem}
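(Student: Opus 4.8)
The plan is to reduce the statement to a fiberwise analysis of the involution $\tilde{\sigma}$ over the real locus, and then to identify the fixed-point locus of $\tilde{\sigma}$ with $\tilde{E}$ via the first projection. First I would invoke the quotient diffeomorphism recalled in the subsection on Euclidean topologies: since by \S\ref{subsec:Galois-descent-VB} one has $E_{\mathbb{R}}\simeq(E\oplus\sigma_{X}^{*}E)/\langle\tilde{\sigma}\rangle$, the quotient map $q:E\oplus\sigma_{X}^{*}E\rightarrow E_{\mathbb{R}}$ restricts to a diffeomorphism between the fixed locus $\bigl((E\oplus\sigma_{X}^{*}E)(\mathbb{C})\bigr)^{\tilde{\sigma}}$ and $E_{\mathbb{R}}(\mathbb{R})$. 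Because $q$ is compatible with the bundle projections $\rho$ and $p_{\mathbb{R}}$ over the quotient morphism $X_{\mathbb{C}}\rightarrow X$, this is a diffeomorphism of topological real vector bundles over the diffeomorphism $X_{\mathbb{C}}(\mathbb{C})^{\sigma_{X}}\stackrel{\sim}{\rightarrow}X(\mathbb{R})$. It therefore suffices to construct an isomorphism of topological real vector bundles between $\bigl((E\oplus\sigma_{X}^{*}E)(\mathbb{C})\bigr)^{\tilde{\sigma}}$ and the restriction $E(\mathbb{C})|_{X_{\mathbb{C}}(\mathbb{C})^{\sigma_{X}}}=\tilde{E}$ over $X_{\mathbb{C}}(\mathbb{C})^{\sigma_{X}}$.

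Next I would analyze $\tilde{\sigma}$ over a point $y\in X_{\mathbb{C}}(\mathbb{C})^{\sigma_{X}}$. Since $\sigma_{X}(y)=y$, the fiber of $E\oplus\sigma_{X}^{*}E$ over $y$ is $E_{y}\oplus(\sigma_{X}^{*}E)_{y}=E_{y}\oplus E_{y}$, and $\tilde{\sigma}$ restricts to a conjugate-linear involution of this fiber. By construction $\tilde{\sigma}$ arises from the descent isomorphism $\varphi$ that exchanges the two summands of $\mathcal{F}=\mathcal{E}\oplus\sigma_{X}^{*}\mathcal{E}$; hence on the fiber over $y$ it has the exchange form $(v,w)\mapsto(\alpha(w),\beta(v))$ for conjugate-linear isomorphisms $\alpha,\beta$ of $E_{y}$, and $\tilde{\sigma}^{2}=\mathrm{id}$ forces $\beta=\alpha^{-1}$. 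A direct computation then shows that its fixed locus is the graph $\{(v,\alpha^{-1}(v)):v\in E_{y}\}$, a real subspace of real dimension $2r$ onto which the first projection $\mathrm{pr}_{1}$ restricts to an $\mathbb{R}$-linear isomorphism with target $E_{y}$ viewed as a real vector space of dimension $2r$, that is to say the fiber of $\tilde{E}$ over $y$.

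Finally I would globalize this identification. The first projection $\mathrm{pr}_{1}:E\oplus\sigma_{X}^{*}E\rightarrow E$ restricts to a continuous map from $\bigl((E\oplus\sigma_{X}^{*}E)(\mathbb{C})\bigr)^{\tilde{\sigma}}$ to $E(\mathbb{C})|_{X_{\mathbb{C}}(\mathbb{C})^{\sigma_{X}}}=\tilde{E}$ covering the identity of $X_{\mathbb{C}}(\mathbb{C})^{\sigma_{X}}$, which by the previous step is a bijective $\mathbb{R}$-linear map on each fiber, hence an isomorphism of topological real vector bundles. Composing with the diffeomorphism of the first step yields the desired isomorphism between $p_{\mathbb{R}}(\mathbb{R}):E_{\mathbb{R}}(\mathbb{R})\rightarrow X(\mathbb{R})$ and $\tilde{p}:\tilde{E}\rightarrow X(\mathbb{R})$.

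The main point to pin down carefully is the exact conjugate-linear exchange form of $\tilde{\sigma}$ over the real locus coming from the descent datum $\varphi$, since the whole computation of the fixed locus hinges on it; once this is settled, the passage from a fiberwise $\mathbb{R}$-linear bijection to a global bundle isomorphism is routine, a continuous fiber-preserving fiberwise-linear bijection between topological vector bundles being automatically an isomorphism.
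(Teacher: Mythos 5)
Your proof is correct and takes essentially the same route as the paper's: both identify $E_{\mathbb{R}}(\mathbb{R})$ with the fixed locus of $\tilde{\sigma}$ inside the restriction of $E\oplus\sigma_{X}^{*}E$ to the real locus of the base, and then identify that fixed locus with $\tilde{E}$ --- the paper via the diagonal embedding into $\tilde{E}\oplus\tilde{E}$ followed by the quotient map, you via the inverse of that map, namely the first projection off the graph. The only difference is cosmetic: the paper pins down the restriction of $\tilde{\sigma}$ over the real locus as the plain exchange involution $j$ on $\tilde{E}\oplus\tilde{E}$, whereas you leave the conjugate-linear identification $\alpha$ implicit and let the graph description absorb it.
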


\begin{proof}
Let $\tilde{\sigma}$ be the lift of $\sigma_{X}$ to a real structure
$\tilde{\sigma}:E\oplus\sigma_{X}^{*}E\rightarrow E\oplus\sigma_{X}^{*}E$
as in \S \ref{subsec:Galois-descent-VB}. Since $\sigma_{X}$ acts
trivially on the real locus $X_{\mathbb{C}}(\mathbb{C})^{\sigma_{X}}\simeq X(\mathbb{R})$
of $X_{\mathbb{C}}$, the restriction of $E(\mathbb{C})\oplus\sigma^{*}E(\mathbb{C})$
to $X_{\mathbb{C}}(\mathbb{C})^{\sigma_{X}}$ is equal to $\tilde{E}\oplus\tilde{E}$
on which the restriction of $\tilde{\sigma}$ acts by the involution
$j$ exchanging the two factors. By construction $p_{\mathbb{R}}(\mathbb{R}):E_{\mathbb{R}}(\mathbb{R})\rightarrow X(\mathbb{R})$
is isomorphic to the quotient bundle $(\tilde{E}\oplus\tilde{E})/\langle j\rangle\rightarrow X(\mathbb{R})$,
and the composition of the diagonal embedding $\tilde{E}\rightarrow\tilde{E}\oplus\tilde{E}$
with the quotient morphism $\tilde{E}\oplus\tilde{E}\rightarrow(\tilde{E}\oplus\tilde{E})/\langle j\rangle$
induces an isomorphism of topological real vector bundle between $\tilde{p}:\tilde{E}\rightarrow X(\mathbb{R})$
and $p_{\mathbb{R}}(\mathbb{R}):E_{\mathbb{R}}(\mathbb{R})\rightarrow X(\mathbb{R})$. 
\end{proof}

\section{Algebraic models of topological vector bundles on the $2$-sphere}

The real $2$-sphere $S^{2}=\left\{ (x,y,z)\in\mathbb{R}^{3},\,x^{2}+y^{2}+z^{2}=1\right\} $
equipped with its usual structure of smooth manifold induced by the
standard smooth structure on $\mathbb{R}^{3}$ is diffeomorphic to
set of real points $\mathbb{Q}^{2}(\mathbb{R})$ of the smooth projective
quadric surface $\mathbb{Q}^{2}\subset\mathbb{P}_{\mathbb{R}}^{3}=\mathrm{Proj}_{\mathbb{R}}(\mathbb{R}[X,Y,Z,T])$
defined by the equation $X^{2}+Y^{2}+Z^{2}-T^{2}=0$, endowed with
its Euclidean topology. The complement of the hyperplane section $H=\{T=0\}$
of $\mathbb{Q}^{2}$ is isomorphic to the smooth real affine quadric
surface $\mathbb{S}^{2}=\mathrm{Spec}(\mathbb{R}[x,y,z]/(x^{2}+y^{2}+z^{2}-1))$.
The divisor class group of $\mathbb{Q}^{2}$ is isomorphic to $\mathbb{Z}$,
generated by the class of $H$, from which it follows that the divisor
class group of $\mathbb{S}^{2}$ is trivial. Furthermore, since $H$
is a conic without real point, the inclusion $\mathbb{S}^{2}\hookrightarrow\mathbb{Q}^{2}$
induces a diffeomorphism $\mathbb{S}^{2}(\mathbb{R})\stackrel{\simeq}{\rightarrow}\mathbb{Q}^{2}(\mathbb{R})\simeq S^{2}$.

Every real algebraic vector bundle $F\rightarrow\mathbb{S}^{2}$ gives
rise to a topological real vector bundle of the same rank $F(\mathbb{R})\rightarrow\mathbb{S}^{2}(\mathbb{R})$
on $\mathbb{S}^{2}(\mathbb{R})\simeq S^{2}$. It was shown by Moore
\cite{Mo71} (see also Fossum \cite{Fo69}) that every topological
real vector bundle $\pi:E\rightarrow S^{2}$ on $S^{2}$ is isomorphic
to one obtained in this way. In other words, every topological real
vector bundle $\pi:E\rightarrow S^{2}$ admits an algebraic model
in the form of an algebraic vector bundle on $\mathbb{S}^{2}$. Later
on, Barge and Ojanguren \cite{BO87} established the surprising much
stronger fact that two algebraic vector bundles on $\mathbb{S}^{2}$
are isomorphic as algebraic vector bundles if and only if their associated
topological vector bundles on $S^{2}$ are isomorphic as topological
vector bundles. Summing up:
\begin{prop}
\label{prop:Alg-equal-Top} The map which associates to a real algebraic
vector bundle $p:F\rightarrow\mathbb{S}^{2}$ on $\mathbb{S}^{2}$
the topological vector bundle $p(\mathbb{R}):\mathbb{F}(\mathbb{R})\rightarrow\mathbb{S}^{2}(\mathbb{R})$
on $\mathbb{S}^{2}(\mathbb{R})\simeq S^{2}$ induces a one-to-one
correspondence between isomorphism classes of algebraic vector bundles
on $\mathbb{S}^{2}$ and isomorphism classes of topological real vector
bundles on $S^{2}$. 
\end{prop}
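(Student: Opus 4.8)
The plan is to show that the stated map is a well-defined bijection on isomorphism classes by verifying well-definedness, surjectivity, and injectivity separately, each of which reduces to material already recalled above.

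First I would check that the assignment descends to isomorphism classes. Given an isomorphism $\phi\colon F\stackrel{\sim}{\to}F'$ of algebraic vector bundles on $\mathbb{S}^{2}$, the functoriality of the Euclidean realization recalled in the previous subsection yields an induced map $\phi(\mathbb{R})\colon F(\mathbb{R})\to F'(\mathbb{R})$ which is a homeomorphism over $\mathbb{S}^{2}(\mathbb{R})$ and linear on each fiber, hence an isomorphism of topological real vector bundles. Thus isomorphic algebraic bundles are sent to isomorphic topological bundles, and the map is well defined on isomorphism classes.

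Surjectivity is then precisely Moore's theorem \cite{Mo71} (see also \cite{Fo69}): under the diffeomorphism $\mathbb{S}^{2}(\mathbb{R})\simeq S^{2}$, every topological real vector bundle $\pi\colon E\to S^{2}$ is isomorphic to $p(\mathbb{R})$ for some algebraic vector bundle $p\colon F\to\mathbb{S}^{2}$, so the map hits every isomorphism class. Injectivity is exactly the Barge--Ojanguren theorem \cite{BO87}: two algebraic vector bundles on $\mathbb{S}^{2}$ whose associated topological real vector bundles on $S^{2}$ are isomorphic are already isomorphic as algebraic vector bundles.

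I expect no genuine obstacle inside the present argument, since each of the three verifications is either immediate from functoriality of taking real points or is a quoted result. The substantive content lies entirely in the cited theorems---the Barge--Ojanguren rigidity statement in particular, whose proof draws on significant input from the algebraic theory of vector bundles over affine varieties. Granting these, the proposition follows formally from the fact that a well-defined map which is both surjective and injective is a bijection.
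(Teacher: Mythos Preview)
Your proposal is correct and matches the paper's approach exactly: the paper does not give a separate proof of this proposition but presents it as a summary (``Summing up'') of the preceding paragraph, which cites Moore \cite{Mo71} and Fossum \cite{Fo69} for surjectivity and Barge--Ojanguren \cite{BO87} for injectivity. Your only addition is the explicit remark on well-definedness via functoriality of taking real points, which the paper leaves implicit.
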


In the next paragraphs, we review briefly the classification of topological
real vector bundles on $S^{2}$ and give a new construction of corresponding
algebraic models in the framework of complex varieties with real structure. 

\subsection{\label{subsec:TopVB-S2}Recollection on topological real vector bundles
on $S^{2}$ }

Every topological real vector bundle on $S^{2}$ is orientable, and
there exists a bijection 
\[
\theta:[S^{1},\mathrm{GL}_{r}^{+}(\mathbb{R})]\rightarrow\mathrm{Vect}_{r}^{+}(S^{2})
\]
between the set of homotopy classes of continuous map from the circle
$S^{1}$ to the group $\mathrm{GL}_{r}^{+}(\mathbb{R})$ of invertible
matrices of rank $r$ with positive determinant, and the set of isomorphism
classes of oriented topological real vector bundles of rank $r$ on
$S^{2}$. This bijection can be explicitly realized via the so-called
clutching construction. Namely, viewing $S^{2}$ as the union of its
closed lower and upper hemispheres $S_{z\leq0}^{2}$ and $S_{z\geq0}^{2}$
with common boundary $\partial S_{z\leq0}^{2}=\partial S_{z\geq0}^{2}=\left\{ z=0\right\} \simeq S^{1}$,
a continuous map $f:S^{1}\rightarrow\mathrm{GL}_{r}^{+}(\mathbb{R})$
determines a real vector bundle $\pi:E_{f}\rightarrow S^{2}$ of rank
$r$ obtained as the quotient of $S_{z\leq0}^{2}\times\mathbb{R}^{r}\sqcup S_{z\geq0}^{2}\times\mathbb{R}^{r}$
by identifying $(x,v)\in\partial S_{z\leq0}^{2}\times\mathbb{R}^{r}$
with $(x,f(x)\cdot v)\in\partial S_{z\geq0}^{2}\times\mathbb{R}^{r}$.
The isomorphism class of $E_{f}$ depends only on the homotopy class
of $f$, and the bijection $\theta$ is defined by sending a clutching
map $f:S^{1}\rightarrow\mathrm{GL}_{r}^{+}(\mathbb{R})$ to the vector
bundle $E_{f}$ it determines (see e.g. \cite[Proposition 1.11]{Hbook}). 

Noting that $\mathrm{GL}_{r}^{+}(\mathbb{R})$ retracts onto the special
orthogonal group $\mathrm{SO}_{r}$, we get that $\mathrm{Vect}_{1}^{+}(S^{2})$
consists of the trivial line bundle only, and that $\mathrm{Vect}_{2}^{+}(S^{2})$
is isomorphic to $\pi_{1}(\mathrm{SO}_{2})\simeq\mathbb{Z}$. Identifying
$S^{1}$ and $SO_{2}$ with the set of complex numbers $\alpha=x+iy$
of modulus one, a corresponding collection of clutching maps $f_{n}:S^{1}\rightarrow\mathrm{SO}_{2}$
is simply given by $\alpha\mapsto\alpha^{n}$, $n\in\mathbb{Z}$.
Writing $\alpha=\exp(i\theta)$, $\theta\in\mathbb{R}$, these correspond
equivalently to the rotation matrices 
\[
M_{2}(n)=\exp(i\theta)^{n}=\left(\begin{array}{cc}
\cos n\theta & \sin n\theta\\
-\sin n\theta & \cos n\theta
\end{array}\right).
\]
The real vector bundle corresponding to $M_{2}(n)\in\mathrm{SO}_{2}$
coincides with the image of the underlying real vector bundle of the
complex line bundle $\mathcal{O}_{\mathbb{CP}^{1}}(n)$ on $\mathbb{CP}^{1}$
via the usual diffeomorphism $\mathbb{CP}^{1}\rightarrow S^{2}=\mathbb{C}\cup\left\{ \infty\right\} $
mapping $[z_{0}:z_{1}]$ to $z_{0}/z_{1}$. For instance, the tangent
bundle $TS^{2}\rightarrow S^{2}$ coincides with the image of underlying
real vector bundle of $\mathcal{O}_{\mathbb{CP}^{1}}(2)$. Note that
the underlying real vector bundle of $\mathcal{O}_{\mathbb{CP}^{1}}(-n)$
endowed with the orientation inherited from the complex structure
is equal to the underlying real vector bundle of $\mathcal{O}_{\mathbb{CP}^{1}}(n)$
but equipped with the opposite orientation. 

For every $r\geq3$, $\mathrm{Vect}_{r}^{+}(S^{2})$ is isomorphic
to $\pi_{1}(\mathrm{SO}_{r})\simeq\mathbb{Z}/2\mathbb{Z}$, a corresponding
clutching map being given by the matrix 
\[
\mathrm{diag}((\exp(i\theta),I_{r-2})=\left(\begin{array}{ccc}
\cos\theta & \sin\theta & 0\\
-\sin\theta & \cos\theta & 0\\
0 & 0 & I_{r-2}
\end{array}\right)
\]
where $I_{r-2}$ denote the $(r-2)\times(r-2)$ identity matrix. In
other words, for every $r\geq3$, the unique nontrivial real topological
vector bundle of rank $r$ on $S^{2}$ is the direct sum of the rank
$2$ vector bundle $\pi_{f_{1}}:E_{f_{1}}\rightarrow S^{2}$ corresponding
to $M_{2}(1)$ and of the trivial vector bundle of rank $r-2$. It
also follows from this description that $E_{f_{n}}$ is either $1$-stably
trivial if $n$ is even or $1$-stably isomorphic to $E_{f_{1}}$
is $n$ is odd. 

\subsection{\label{subsec:Algebraic-models-VB-S2}Algebraic models as vector
bundles on the projective quadric}

In view of the description of isomorphism classes of topological real
vector bundles on $S^{2}$ recalled in \S \ref{subsec:TopVB-S2},
to show that every real topological vector bundle $\pi:E\rightarrow S^{2}$
admits an algebraic model, it is enough to show that for every $n\geq1$,
the real topological vector bundle $\pi_{f_{n}}:E_{f_{n}}\rightarrow S^{2}$
corresponding to the underlying real vector bundle of the complex
line bundle $\mathcal{O}_{\mathbb{CP}^{1}}(n)$ on $\mathbb{CP}^{1}$
admits such a model. Models for these bundles were constructed by
Moore \cite{Mo71} and Swan \cite{Sw93} in the form of certain projective
modules on the coordinate ring of the affine surface $\mathbb{S}^{2}$.
The construction we give below is in contrast of geometric nature,
providing models of these bundles in the form of restrictions to $\mathbb{S}^{2}$
of natural algebraic vector bundles on the real projective quadric
$\mathbb{Q}^{2}$. \\

The closed embedding $\mathbb{P}_{\mathbb{C}}^{1}\times\mathbb{P}_{\mathbb{C}}^{1}\rightarrow\mathbb{P}_{\mathbb{C}}^{3}$
defined by 
\[
([x_{0}:x_{1}][y_{0}:y_{1}])\mapsto[X:Y:Z:T]=[x_{0}y_{1}+x_{1}y_{0}:i(x_{1}y_{0}-x_{0}y_{1}):x_{0}y_{0}-x_{1}y_{1}:x_{0}y_{0}+x_{1}y_{1}]
\]
induces an isomorphism $\psi:\mathbb{P}_{\mathbb{C}}^{1}\times\mathbb{P}_{\mathbb{C}}^{1}\stackrel{\simeq}{\longrightarrow}\mathbb{Q}_{\mathbb{C}}^{2}$.
The pull-back $\psi^{*}\sigma_{\mathbb{Q}^{2}}$ of the canonical
real structure $\sigma_{\mathbb{Q}^{2}}$ on the complexification
$\mathbb{Q}_{\mathbb{C}}^{2}$ of $\mathbb{Q}^{2}$ is the real structure
$\sigma=s_{\Delta}\circ(\sigma_{\mathbb{P}_{\mathbb{R}}^{1}}\times\sigma_{\mathbb{P}_{\mathbb{R}}^{1}})$
on $\mathbb{P}_{\mathbb{C}}^{1}\times\mathbb{P}_{\mathbb{C}}^{1}$,
where $s_{\Delta}$ is the algebraic involution which exchanges the
two factors and $\sigma_{\mathbb{P}_{\mathbb{R}}^{1}}$ is the canonical
real structure on $\mathbb{P}_{\mathbb{C}}^{1}=(\mathbb{P}_{\mathbb{R}}^{1})_{\mathbb{C}}$.
Applying the construction explained in \S \ref{subsec:Galois-descent-VB}
to the line bundles 
\[
p_{n}:L_{n}=\mathrm{pr}_{1}^{*}\mathcal{O}_{\mathbb{P}_{\mathbb{C}}^{1}}(n)\rightarrow\mathbb{P}_{\mathbb{C}}^{1}\times\mathbb{P}_{\mathbb{C}}^{1},\quad n\geq0
\]
on $\mathbb{P}_{\mathbb{C}}^{1}\times\mathbb{P}_{\mathbb{C}}^{1}$,
we obtain a collection of algebraic vector bundles $p_{n,\mathbb{R}}:L_{n,\mathbb{R}}\rightarrow\mathbb{Q}^{2}$
of rank $2$ on $\mathbb{Q}^{2}$. 
\begin{lem}
\label{lem:algebraic-model-rk2} For every $n\geq0$, the following
hold: 

a) The complexification $p_{n,\mathbb{C}}:(L_{n,\mathbb{R}})_{\mathbb{C}}\rightarrow\mathbb{P}_{\mathbb{C}}^{1}\times\mathbb{P}_{\mathbb{C}}^{1}$
is isomorphic to $\mathrm{pr}_{1}^{*}\mathcal{O}_{\mathbb{P}_{\mathbb{C}}^{1}}(n)\oplus\mathrm{pr}_{2}^{*}\mathcal{O}_{\mathbb{P}_{\mathbb{C}}^{1}}(n)$, 

b) The topological vector bundle $p_{n,\mathbb{R}}(\mathbb{R}):L_{n,\mathbb{R}}(\mathbb{R})\rightarrow\mathbb{Q}^{2}\left(\mathbb{R}\right)=\mathbb{S}(\mathbb{R})$
is isomorphic to $\pi_{f_{n}}:E_{f_{n}}\rightarrow S^{2}$.
\end{lem}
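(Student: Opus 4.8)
The plan is to prove a) by a direct computation of the pullback $\sigma^{*}L_{n}$, and to deduce b) from Lemma \ref{lem:Top-Down-Bundle} by identifying the restriction of $L_{n}$ to the real locus of $\sigma$ with the line bundle $\mathcal{O}_{\mathbb{CP}^{1}}(n)$.

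For a), recall from \S \ref{subsec:Galois-descent-VB} that the complexification of the descended bundle $L_{n,\mathbb{R}}$ is canonically isomorphic to $L_{n}\oplus\sigma^{*}L_{n}$, so it is enough to compute $\sigma^{*}L_{n}=\sigma^{*}\mathrm{pr}_{1}^{*}\mathcal{O}_{\mathbb{P}_{\mathbb{C}}^{1}}(n)$. Using $\mathrm{pr}_{1}\circ s_{\Delta}=\mathrm{pr}_{2}$ together with the fact that $\sigma_{\mathbb{P}_{\mathbb{R}}^{1}}\times\sigma_{\mathbb{P}_{\mathbb{R}}^{1}}$ commutes with the projections, one gets $\mathrm{pr}_{1}\circ\sigma=\sigma_{\mathbb{P}_{\mathbb{R}}^{1}}\circ\mathrm{pr}_{2}$, hence $\sigma^{*}\mathrm{pr}_{1}^{*}\mathcal{O}(n)\cong\mathrm{pr}_{2}^{*}\sigma_{\mathbb{P}_{\mathbb{R}}^{1}}^{*}\mathcal{O}(n)$. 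Since $\mathcal{O}_{\mathbb{P}_{\mathbb{C}}^{1}}(1)$ is the complexification of $\mathcal{O}_{\mathbb{P}_{\mathbb{R}}^{1}}(1)$, the canonical real structure supplies an isomorphism $\sigma_{\mathbb{P}_{\mathbb{R}}^{1}}^{*}\mathcal{O}(n)\cong\mathcal{O}(n)$, so that $\sigma^{*}L_{n}\cong\mathrm{pr}_{2}^{*}\mathcal{O}_{\mathbb{P}_{\mathbb{C}}^{1}}(n)$ and $(L_{n,\mathbb{R}})_{\mathbb{C}}\cong\mathrm{pr}_{1}^{*}\mathcal{O}_{\mathbb{P}_{\mathbb{C}}^{1}}(n)\oplus\mathrm{pr}_{2}^{*}\mathcal{O}_{\mathbb{P}_{\mathbb{C}}^{1}}(n)$, which is a).

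For b), I would apply Lemma \ref{lem:Top-Down-Bundle} with $X=\mathbb{Q}^{2}$, identifying $X_{\mathbb{C}}$ with $\mathbb{P}_{\mathbb{C}}^{1}\times\mathbb{P}_{\mathbb{C}}^{1}$ via $\psi$ so that the canonical real structure becomes $\sigma$ and $E=L_{n}$ has rank $r=1$. The lemma then identifies $p_{n,\mathbb{R}}(\mathbb{R})$ with the rank $2$ topological real vector bundle $\tilde{p}\colon\tilde{E}\to\mathbb{Q}^{2}(\mathbb{R})$ underlying the restriction of $L_{n}(\mathbb{C})$ to the real locus $(\mathbb{P}_{\mathbb{C}}^{1}\times\mathbb{P}_{\mathbb{C}}^{1})(\mathbb{C})^{\sigma}$. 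A short computation shows that $([x_{0}:x_{1}],[y_{0}:y_{1}])$ is fixed by $\sigma$ exactly when $[y_{0}:y_{1}]=\overline{[x_{0}:x_{1}]}$, so the real locus is the anti-diagonal $\{(p,\overline{p}):p\in\mathbb{P}^{1}(\mathbb{C})\}$ and $\mathrm{pr}_{1}$ restricts to a diffeomorphism onto $\mathbb{CP}^{1}$ through which $\mathrm{pr}_{1}$ becomes the identity. Consequently the restriction of $L_{n}=\mathrm{pr}_{1}^{*}\mathcal{O}_{\mathbb{P}_{\mathbb{C}}^{1}}(n)$ to the real locus is carried to $\mathcal{O}_{\mathbb{CP}^{1}}(n)$ as a topological complex line bundle.

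To finish I would compare the resulting diffeomorphism $S^{2}\simeq\mathbb{Q}^{2}(\mathbb{R})\simeq(\mathbb{P}_{\mathbb{C}}^{1}\times\mathbb{P}_{\mathbb{C}}^{1})(\mathbb{C})^{\sigma}\stackrel{\mathrm{pr}_{1}}{\simeq}\mathbb{CP}^{1}$ with the standard diffeomorphism $\mathbb{CP}^{1}\to S^{2}$ of \S \ref{subsec:TopVB-S2}; under it $\tilde{E}$ becomes the underlying real vector bundle of $\mathcal{O}_{\mathbb{CP}^{1}}(n)$, which is precisely $\pi_{f_{n}}\colon E_{f_{n}}\to S^{2}$ by the discussion there. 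I expect this last bookkeeping to be the main obstacle: one must check that the chain of identifications is orientation compatible, so that the degree of the transported line bundle is $+n$ and not $-n$. Indeed, replacing $\mathrm{pr}_{1}$ by $\mathrm{pr}_{2}$ composes the identification with the orientation-reversing complex conjugation of $\mathbb{CP}^{1}$ and would produce $\mathcal{O}_{\mathbb{CP}^{1}}(-n)$; the two choices give isomorphic unoriented real rank $2$ bundles, but the sign must be tracked to match $f_{n}$ for $n\geq0$ on the nose.
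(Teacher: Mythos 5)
Your proof is correct and follows essentially the same route as the paper: part a) is the identical computation $\sigma^{*}\mathrm{pr}_{1}^{*}\mathcal{O}_{\mathbb{P}_{\mathbb{C}}^{1}}(n)\simeq\mathrm{pr}_{2}^{*}\sigma_{\mathbb{P}_{\mathbb{R}}^{1}}^{*}\mathcal{O}_{\mathbb{P}_{\mathbb{C}}^{1}}(n)\simeq\mathrm{pr}_{2}^{*}\mathcal{O}_{\mathbb{P}_{\mathbb{C}}^{1}}(n)$, and in part b) your identification of the real locus with the anti-diagonal and of $\mathrm{pr}_{1}$ restricted to it as a diffeomorphism onto $\mathbb{CP}^{1}$ is exactly the inverse of the paper's map $\xi=(\mathrm{id}\times\sigma_{\mathbb{P}_{\mathbb{R}}^{1}})\circ\Delta$, after which both arguments invoke Lemma \ref{lem:Top-Down-Bundle}. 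The orientation bookkeeping you flag as the main obstacle is in fact not needed: the statement asserts an isomorphism of (unoriented) topological real vector bundles, and as recalled in \S \ref{subsec:TopVB-S2} the underlying real bundles of $\mathcal{O}_{\mathbb{CP}^{1}}(n)$ and $\mathcal{O}_{\mathbb{CP}^{1}}(-n)$ differ only by orientation, hence are isomorphic as such.
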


\begin{proof}
By construction, $(L_{n,\mathbb{R}})_{\mathbb{C}}$ is isomorphic
to $L_{n}\oplus\sigma^{*}L_{n}$. Assertion a) then follows from the
the identity 
\[
\sigma^{*}(\mathrm{pr}_{1}^{*}\mathcal{O}_{\mathbb{P}_{\mathbb{C}}^{1}}(n))=(\sigma_{\mathbb{P}_{\mathbb{R}}^{1}}\times\sigma_{\mathbb{P}_{\mathbb{R}}^{1}})^{*}(s_{\Delta}^{*}(\mathrm{pr}_{1}^{*}\mathcal{O}_{\mathbb{P}_{\mathbb{C}}^{1}}(n)))\simeq\mathrm{pr}_{2}^{*}(\sigma_{\mathbb{P}_{\mathbb{R}}^{1}}^{*}(\mathcal{O}_{\mathbb{P}_{\mathbb{C}}^{1}}(n)))
\]
and the fact that $\sigma_{\mathbb{P}_{\mathbb{R}}^{1}}^{*}(\mathcal{O}_{\mathbb{P}_{\mathbb{C}}^{1}}(n))\simeq\mathcal{O}_{\mathbb{P}_{\mathbb{C}}^{1}}(n)$
as line bundles on $\mathbb{P}_{\mathbb{C}}^{1}$. 

The map $\xi=(\mathrm{id}\times\sigma_{\mathbb{P}_{\mathbb{R}}^{2}})\circ\Delta:\mathbb{P}_{\mathbb{C}}^{1}\rightarrow\mathbb{P}_{\mathbb{C}}^{1}\times\mathbb{P}_{\mathbb{C}}^{1}$,
where $\Delta$ denotes the diagonal embedding, induces a diffeomorphism
between $\mathbb{P}_{\mathbb{C}}^{1}(\mathbb{C})=\mathbb{CP}^{1}\simeq S^{2}$
and the real locus of $\mathbb{P}_{\mathbb{C}}^{1}\times\mathbb{P}_{\mathbb{C}}^{1}$
endowed with the real structure $\sigma$. Assertion b) then follows
from Lemma \ref{lem:Top-Down-Bundle} and the identity 
\[
\xi^{*}(L_{n})(\mathbb{C})=((\mathrm{id}\times\sigma_{\mathbb{P}_{\mathbb{R}}^{2}})\circ\Delta)^{*}(\mathrm{pr}_{1}^{*}\mathcal{O}_{\mathbb{P}_{\mathbb{C}}^{1}}(n))(\mathbb{C})\simeq\mathcal{O}_{\mathbb{CP}^{1}}(n)
\]
which holds by construction of $L_{n}$. 
\end{proof}
\begin{rem}
\label{rem:Swan-Dual}Via the isomorphism $\psi:\mathbb{P}_{\mathbb{C}}^{1}\times\mathbb{P}_{\mathbb{C}}^{1}\stackrel{\simeq}{\rightarrow}\mathbb{Q}_{\mathbb{C}}^{2}$,
the line bundle $L_{n}=\mathrm{pr}_{1}^{*}\mathcal{O}_{\mathbb{P}_{\mathbb{C}}^{1}}(n)$
coincides with the line bundle on $\mathbb{Q}_{\mathbb{C}}^{2}$ associated
to the Cartier divisor $nC$, where $C$ is the irreducible and reduced
curve $\{X+iY=T-Z=0\}$ on $\mathbb{Q}_{\mathbb{C}}^{2}$. The $\Gamma(\mathbb{S}^{2},\mathcal{O}_{\mathbb{S}^{2}})$-module
of global sections $\Gamma(\mathbb{S}^{2},L_{n,\mathbb{R}})$ of the
restriction of $L_{n,\mathbb{R}}$ to $\mathbb{S}^{2}=\mathbb{Q}^{2}\setminus\{T=0\}$
then coincides with the invertible $\Gamma(\mathbb{S}_{\mathbb{C}}^{2},\mathcal{O}_{\mathbb{S}_{\mathbb{C}}^{2}})$-module
$\mathfrak{p}^{n}=(x+iy,1-z)^{n}\subset\Gamma(\mathbb{S}_{\mathbb{C}}^{2},\mathcal{O}_{\mathbb{S}_{\mathbb{C}}^{2}})$,
viewed as a projective $\Gamma(\mathbb{S}^{2},\mathcal{O}_{\mathbb{S}^{2}})$-module
of rank $2$ via the inclusion $\Gamma(\mathbb{S}^{2},\mathcal{O}_{\mathbb{S}^{2}})\hookrightarrow\Gamma(\mathbb{S}_{\mathbb{C}}^{2},\mathcal{O}_{\mathbb{S}_{\mathbb{C}}^{2}})$.
We thus recover geometrically the construction given by Swan in \cite{Sw93}. 

Note also that since the inverse image of $\mathbb{Q}_{\mathbb{C}}^{2}\setminus\mathbb{S}_{\mathbb{C}}^{2}$
by $\psi$ is the irreducible curve $\Gamma=\{x_{0}y_{0}+x_{1}y_{1}=0\}$
of type $(1,1)$ in the divisor class group of $\mathbb{P}_{\mathbb{C}}^{1}\times\mathbb{P}_{\mathbb{C}}^{1}$,
the restriction of $(L_{n,\mathbb{R}})_{\mathbb{C}}\simeq\mathrm{pr}_{1}^{*}\mathcal{O}_{\mathbb{P}_{\mathbb{C}}^{1}}(n)\oplus\mathrm{pr}_{2}^{*}\mathcal{O}_{\mathbb{P}_{\mathbb{C}}^{1}}(n)$
to $\mathbb{P}_{\mathbb{C}}^{1}\times\mathbb{P}_{\mathbb{C}}^{1}\setminus\Gamma\simeq\mathbb{S}_{\mathbb{C}}^{2}$
is isomorphic to that of $\mathrm{pr}_{1}^{*}\mathcal{O}_{\mathbb{P}_{\mathbb{C}}^{1}}(n)\oplus\mathrm{pr}_{1}^{*}\mathcal{O}_{\mathbb{P}_{\mathbb{C}}^{1}}(-n)\simeq L_{n}\oplus L_{n}^{\vee}$,
where $L_{n}^{\vee}$ denotes the dual of $L_{n}$. 
\end{rem}

\section{Real forms of the trivial bundle $\mathbb{S}^{2}\times\mathbb{A}_{\mathbb{R}}^{2}$ }
\begin{notation}
\label{nota:Vn}For every $n\geq0$, we let $q_{n}:V_{n}\rightarrow\mathbb{S}^{2}$
be the restriction to $\mathbb{S}^{2}\subset\mathbb{Q}^{2}$ of the
rank $2$ vector bundle $p_{n,\mathbb{R}}:L_{n,\mathbb{R}}\rightarrow\mathbb{Q}^{2}$
constructed in \S \ref{subsec:Algebraic-models-VB-S2}.
\end{notation}

\subsection{Proof of Theorem \ref{thm:MainTh-1}}

The following proposition implies Theorem \ref{thm:MainTh-1}: 
\begin{prop}
\label{prop:Real-forms-trivial-bundle}The real algebraic varieties
$V_{n}$, $n\geq0$, are pairwise non isomorphic real forms of $V_{0}=\mathbb{S}^{2}\times\mathbb{A}_{\mathbb{R}}^{2}$.
\end{prop}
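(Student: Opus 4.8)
The plan is to prove two things: first, that each $V_n$ is a real form of $V_0 = \mathbb{S}^2 \times \mathbb{A}_{\mathbb{R}}^2$, meaning that the complexifications $(V_n)_{\mathbb{C}}$ are all isomorphic to $(V_0)_{\mathbb{C}} = \mathbb{S}_{\mathbb{C}}^2 \times \mathbb{A}_{\mathbb{C}}^2$; and second, that the $V_n$ are pairwise non-isomorphic as real algebraic varieties. For the first part, I would start from Lemma \ref{lem:algebraic-model-rk2}a), which identifies $(L_{n,\mathbb{R}})_{\mathbb{C}}$ with $\mathrm{pr}_1^*\mathcal{O}_{\mathbb{P}_{\mathbb{C}}^1}(n) \oplus \mathrm{pr}_2^*\mathcal{O}_{\mathbb{P}_{\mathbb{C}}^1}(n)$ on $\mathbb{P}_{\mathbb{C}}^1 \times \mathbb{P}_{\mathbb{C}}^1 \simeq \mathbb{Q}_{\mathbb{C}}^2$. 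Restricting to $\mathbb{S}_{\mathbb{C}}^2 = \mathbb{Q}_{\mathbb{C}}^2 \setminus \{T=0\}$, Remark \ref{rem:Swan-Dual} shows this restriction is isomorphic to $L_n \oplus L_n^{\vee}$ over $\mathbb{P}_{\mathbb{C}}^1 \times \mathbb{P}_{\mathbb{C}}^1 \setminus \Gamma \simeq \mathbb{S}_{\mathbb{C}}^2$. Since $\mathbb{S}_{\mathbb{C}}^2$ has trivial Picard group (the divisor class group of $\mathbb{S}^2$ is trivial, as noted in \S\ref{subsec:Algebraic-models-VB-S2}, and this persists over $\mathbb{C}$), both $L_n$ and $L_n^{\vee}$ restrict to trivial line bundles on $\mathbb{S}_{\mathbb{C}}^2$, so $(V_n)_{\mathbb{C}}$ is isomorphic as a vector bundle to the trivial rank-$2$ bundle $\mathbb{S}_{\mathbb{C}}^2 \times \mathbb{A}_{\mathbb{C}}^2$. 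This settles that each $V_n$ is a real form of $V_0$.

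The substantial part is the pairwise non-isomorphism of the $V_n$ as real algebraic varieties, and here I would pass to the real loci and use the topological invariants recalled in \S\ref{subsec:TopVB-S2}. By Lemma \ref{lem:algebraic-model-rk2}b), the topological real vector bundle $q_n(\mathbb{R}) \colon V_n(\mathbb{R}) \to \mathbb{S}^2(\mathbb{R}) \simeq S^2$ is isomorphic to $\pi_{f_n} \colon E_{f_n} \to S^2$, the underlying rank-$2$ real bundle of $\mathcal{O}_{\mathbb{CP}^1}(n)$. These bundles are classified by $\mathrm{Vect}_2^+(S^2) \simeq \pi_1(\mathrm{SO}_2) \simeq \mathbb{Z}$, so they are pairwise non-isomorphic as topological bundles. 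The strategy is to extract from this a diffeomorphism invariant of the total space $V_n(\mathbb{R})$ that distinguishes the $V_n$. The natural candidate is the Euler class, equivalently the self-intersection of the zero section: the zero section embeds $S^2$ into $V_n(\mathbb{R})$ as a submanifold whose normal bundle is exactly $E_{f_n}$, and the Euler number of $E_{f_n}$ equals $n$ (the degree of $\mathcal{O}_{\mathbb{CP}^1}(n)$, up to sign/orientation conventions). An isomorphism of real algebraic varieties $V_n \xrightarrow{\sim} V_m$ induces a diffeomorphism $V_n(\mathbb{R}) \xrightarrow{\sim} V_m(\mathbb{R})$ of the total spaces, so it suffices to show that the self-intersection number $n$ is recoverable from the diffeomorphism type of $V_n(\mathbb{R})$ alone, without reference to the bundle projection.

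The hard part, which I expect to be the crux, is precisely this last point: an abstract diffeomorphism of total spaces need not respect the vector bundle structure or the zero section, so one cannot directly read off the Euler number. I would address this by producing a genuinely intrinsic invariant. Since $V_n(\mathbb{R})$ deformation-retracts onto its zero section $S^2$, it is homotopy equivalent to $S^2$ and in particular simply connected with $H^2(V_n(\mathbb{R}); \mathbb{Z}) \simeq \mathbb{Z}$; this cohomology alone does not see $n$. The refined invariant I would use is the cohomology ring together with the fundamental-class-at-infinity or, more robustly, the intersection form on the open $4$-manifold relative to the behavior at the single end. Concretely, the zero section generates $H_2$, its self-intersection in the total space is $n$, and this self-intersection is the value of the cup product $H^2_c \times H^2 \to H^4_c \simeq \mathbb{Z}$ pairing the compactly-supported generator with itself — an invariant of the oriented-diffeomorphism type of the open manifold $V_n(\mathbb{R})$. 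Thus $|n|$ is a diffeomorphism invariant. Since $n \geq 0$, this distinguishes all the $V_n$, completing the argument. One subtlety to handle carefully is orientation conventions and the sign ambiguity between $\mathcal{O}(n)$ and $\mathcal{O}(-n)$ noted in \S\ref{subsec:TopVB-S2}; because we only range over $n \geq 0$, extracting the absolute value $|n|$ suffices and sidesteps the orientation issue.
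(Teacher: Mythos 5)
Your second part (pairwise non-isomorphism) is fine, but your first part---that each $V_{n,\mathbb{C}}$ is trivial, i.e.\ that the $V_n$ are real forms of $V_0$ at all---contains a genuine error: the claim that triviality of the divisor class group of $\mathbb{S}^{2}$ ``persists over $\mathbb{C}$''. It does not: $\mathrm{Pic}(\mathbb{S}_{\mathbb{C}}^{2})\simeq\mathbb{Z}$. Indeed, via $\psi$ one has $\mathbb{S}_{\mathbb{C}}^{2}\simeq\mathbb{P}_{\mathbb{C}}^{1}\times\mathbb{P}_{\mathbb{C}}^{1}\setminus\Gamma$ with $\Gamma$ of type $(1,1)$, so $\mathrm{Pic}(\mathbb{S}_{\mathbb{C}}^{2})\simeq\mathbb{Z}^{2}/\mathbb{Z}(1,1)\simeq\mathbb{Z}$, and the restriction of $L_{n}=\mathrm{pr}_{1}^{*}\mathcal{O}_{\mathbb{P}_{\mathbb{C}}^{1}}(n)$, of class $(n,0)$, maps to $n\neq0$: it is a \emph{nontrivial} line bundle (equivalently, the ideal $\mathfrak{p}^{n}=(x+iy,1-z)^{n}$ of Remark \ref{rem:Swan-Dual} is invertible but not principal). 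The class group of the real surface is trivial only because complex conjugation exchanges the two rulings, hence acts by $-1$ on $\mathrm{Pic}(\mathbb{S}_{\mathbb{C}}^{2})$. So neither summand of $(L_{n}\oplus L_{n}^{\vee})|_{\mathbb{S}_{\mathbb{C}}^{2}}$ is trivial, and the triviality of their direct sum is a genuinely rank-two phenomenon your argument does not reach. The paper's Lemma \ref{lem:trivial-complex} obtains it from Murthy's theorem---every rank $2$ algebraic vector bundle on $\mathbb{S}_{\mathbb{C}}^{2}$ splits off a trivial factor, hence is isomorphic to its determinant plus a trivial line bundle, and here the determinant is trivial---with Proposition \ref{prop:real-Struct} providing explicit trivializations by gluing the matrices $M_{n,\pm}$. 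Some input of this kind is indispensable at this step.

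For the non-isomorphism of the $V_{n}$, your route is correct but genuinely different from the paper's. The paper proves the stronger algebraic Lemma \ref{lem:non-iso-total-spaces}: any abstract isomorphism between total spaces of vector bundles on $\mathbb{S}^{2}$ forces an isomorphism of vector bundles (via constancy of morphisms $\mathbb{A}_{\mathbb{R}}^{1}\rightarrow\mathbb{S}^{2}$, the Eakin--Heinzer relative tangent bundle trick, Barge--Ojanguren through Proposition \ref{prop:Alg-equal-Top}, and the mapping class group of $S^{2}$); that stronger statement is then re-used for Corollary \ref{cor:Stable-Iso} and Theorem \ref{thm:MainTh-2}. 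You instead distinguish the smooth open four-manifolds $V_{n}(\mathbb{R})$ intrinsically, by the pairing $H_{c}^{2}\times H^{2}\rightarrow H_{c}^{4}$ applied to the compactly supported generator, which returns $\pm n$; this is sound (it is in the same spirit as the paper's remark after that lemma, which uses $H_{1}^{\infty}\simeq\mathbb{Z}/n\mathbb{Z}$) and suffices for the proposition, at the price of proving less along the way. So once the first half is repaired---e.g.\ by invoking Murthy as in Lemma \ref{lem:trivial-complex}---your proof stands.
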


The proof is a combination of Lemma \ref{lem:trivial-complex} and
Lemma \ref{lem:non-iso-total-spaces} below which show that the real
algebraic varieties $V_{n}$, $n\geq0$, are pairwise non isomorphic
with isomorphic complexifications $V_{n,\mathbb{C}}\simeq\mathbb{S}_{\mathbb{C}}^{2}\times\mathbb{A}_{\mathbb{C}}^{2}$.
\\

By construction, the rank $2$ vector bundles $p_{n,\mathbb{R}}:L_{n,\mathbb{R}}\rightarrow\mathbb{Q}^{2}$,
$n\geq0$, on $\mathbb{Q}^{2}$ have pairwise non-isomorphic complexifications
$(L_{n,\mathbb{R}})_{\mathbb{C}}\simeq\mathrm{pr}_{1}^{*}\mathcal{O}_{\mathbb{P}_{\mathbb{C}}^{1}}(n)\oplus\mathrm{pr}_{2}^{*}\mathcal{O}_{\mathbb{P}_{\mathbb{C}}^{1}}(n)\rightarrow\mathbb{P}_{\mathbb{C}}^{1}\times\mathbb{P}_{\mathbb{C}}^{1}$.
The next lemma shows in contrast that their restrictions to $\mathbb{S}^{2}\subset\mathbb{Q}^{2}$
all have isomorphic complexifications: 
\begin{lem}
\label{lem:trivial-complex}For every $n\geq0$, the complexification
$q_{n,\mathbb{C}}:V_{n,\mathbb{C}}\rightarrow\mathbb{S}_{\mathbb{C}}^{2}$
of $q_{n}:V_{n}\rightarrow\mathbb{S}^{2}$ is isomorphic to the trivial
vector bundle $\mathrm{pr}_{1}:\mathbb{S}_{\mathbb{C}}^{2}\times\mathbb{A}_{\mathbb{C}}^{2}\rightarrow\mathbb{S}_{\mathbb{C}}^{2}$. 
\end{lem}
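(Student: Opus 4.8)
The plan is to reduce the statement to the triviality of a rank-two bundle with trivial determinant on the affine surface $\mathbb{S}_{\mathbb{C}}^{2}$, and then to trivialize it by producing a nowhere-vanishing global section. First I would invoke Lemma~\ref{lem:algebraic-model-rk2}(a) to identify $V_{n,\mathbb{C}}$ with the restriction to $\mathbb{S}_{\mathbb{C}}^{2}\simeq(\mathbb{P}_{\mathbb{C}}^{1}\times\mathbb{P}_{\mathbb{C}}^{1})\setminus\Gamma$ of $\mathrm{pr}_{1}^{*}\mathcal{O}_{\mathbb{P}_{\mathbb{C}}^{1}}(n)\oplus\mathrm{pr}_{2}^{*}\mathcal{O}_{\mathbb{P}_{\mathbb{C}}^{1}}(n)$. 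Writing $L_{1}$ and $L_{2}$ for the two restricted summands, the key structural input, already recorded in Remark~\ref{rem:Swan-Dual}, is that the removed curve $\Gamma$ is of type $(1,1)$. Hence $L_{1}\otimes L_{2}$ is the restriction of $\mathrm{pr}_{1}^{*}\mathcal{O}_{\mathbb{P}_{\mathbb{C}}^{1}}(n)\otimes\mathrm{pr}_{2}^{*}\mathcal{O}_{\mathbb{P}_{\mathbb{C}}^{1}}(n)\cong\mathcal{O}(\Gamma)^{\otimes n}$, which becomes trivial on the complement of $\Gamma$; in particular $\det V_{n,\mathbb{C}}\cong\mathcal{O}_{\mathbb{S}_{\mathbb{C}}^{2}}$.

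The second step is a general triviality criterion: on a smooth affine variety $X$, a rank-two vector bundle $E$ with $\det E\cong\mathcal{O}_{X}$ is trivial as soon as it admits a nowhere-vanishing global section $\sigma$. Indeed, such a $\sigma$ realizes $\mathcal{O}_{X}$ as a subbundle of $E$ whose rank-one quotient $Q$ satisfies $Q\cong\det E\cong\mathcal{O}_{X}$, and the resulting extension $0\to\mathcal{O}_{X}\to E\to\mathcal{O}_{X}\to 0$ splits because $\mathrm{Ext}^{1}(\mathcal{O}_{X},\mathcal{O}_{X})=H^{1}(X,\mathcal{O}_{X})=0$ for $X$ affine. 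For $E=L_{1}\oplus L_{2}$ a global section is just a pair $(s,t)$ with $s\in H^{0}(\mathbb{S}_{\mathbb{C}}^{2},L_{1})$ and $t\in H^{0}(\mathbb{S}_{\mathbb{C}}^{2},L_{2})$, and it is nowhere-vanishing precisely when $s$ and $t$ have no common zero on $\mathbb{S}_{\mathbb{C}}^{2}$. So the whole proof comes down to exhibiting two such sections without common zero.

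The heart of the argument, and the step I expect to require the most care, is to choose the pair so that its common zero locus is forced into the removed curve $\Gamma$. I would take $s=\mathrm{pr}_{1}^{*}(x_{0}^{n})$ and $t=\mathrm{pr}_{2}^{*}(y_{1}^{n})$, pulled back from the standard sections of $\mathcal{O}_{\mathbb{P}_{\mathbb{C}}^{1}}(n)$ on the two factors. On $\mathbb{P}_{\mathbb{C}}^{1}\times\mathbb{P}_{\mathbb{C}}^{1}$ they vanish respectively on $\{x_{0}=0\}$ and $\{y_{1}=0\}$, whose intersection is the single point $([0:1],[1:0])$. The point of the choice is that this point lies on $\Gamma=\{x_{0}y_{0}+x_{1}y_{1}=0\}$, since $0\cdot 1+1\cdot 0=0$ there, and is therefore not a point of $\mathbb{S}_{\mathbb{C}}^{2}$. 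Consequently $s$ and $t$ have no common zero on $\mathbb{S}_{\mathbb{C}}^{2}$, the pair $(s,t)$ is a nowhere-vanishing section of $V_{n,\mathbb{C}}\cong L_{1}\oplus L_{2}$, and the criterion above yields $V_{n,\mathbb{C}}\cong\mathbb{S}_{\mathbb{C}}^{2}\times\mathbb{A}_{\mathbb{C}}^{2}$. The only delicate bookkeeping is to keep track of which coordinate loci the chosen sections vanish on, so that their intersection is pushed onto $\Gamma$ and not onto a point that survives in the affine surface.
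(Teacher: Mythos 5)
Your proof is correct, but it takes a genuinely different route from the paper at the decisive step. Both arguments begin the same way: identify $V_{n,\mathbb{C}}$ with the restriction of $\mathrm{pr}_{1}^{*}\mathcal{O}_{\mathbb{P}_{\mathbb{C}}^{1}}(n)\oplus\mathrm{pr}_{2}^{*}\mathcal{O}_{\mathbb{P}_{\mathbb{C}}^{1}}(n)$ to the complement of the $(1,1)$-curve $\Gamma$, and observe that the determinant is trivial (the paper gets this from the triviality of the divisor class group of $\mathbb{S}^{2}$, or from Remark \ref{rem:Swan-Dual}; your computation via $\mathcal{O}(n\Gamma)$ trivializing off $\Gamma$ is the same fact). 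The paper then finishes by quoting Murthy's theorem \cite{Mu69}: every rank-$2$ algebraic vector bundle on the affine surface $\mathbb{S}_{\mathbb{C}}^{2}$ splits off a trivial factor, hence is isomorphic to its determinant plus a trivial line bundle. You instead avoid Murthy entirely and exhibit an explicit nowhere-vanishing global section: the pair $\bigl(\mathrm{pr}_{1}^{*}(x_{0}^{n}),\,\mathrm{pr}_{2}^{*}(y_{1}^{n})\bigr)$, whose unique common zero $([0:1],[1:0])$ satisfies $x_{0}y_{0}+x_{1}y_{1}=0$ and so lies on the removed curve $\Gamma$; the resulting extension $0\to\mathcal{O}\to V_{n,\mathbb{C}}\to\mathcal{O}\to0$ splits since $H^{1}$ of a coherent sheaf on an affine variety vanishes. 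The trade-off is clear: the paper's argument is shorter and applies to \emph{any} rank-$2$ bundle on $\mathbb{S}_{\mathbb{C}}^{2}$ (Murthy's result being the heavy input), while yours is self-contained and elementary, exploiting the specific split form of $V_{n,\mathbb{C}}$; in spirit it anticipates the explicit trivializations $\Theta_{n}$ that the paper only constructs later, in Proposition \ref{prop:real-Struct}.
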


\begin{proof}
Since the Picard group of $\mathbb{S}^{2}$ is equal to its divisor
class group which is trivial, the determinant $\det(V_{n})$ of $V_{n}$
is isomorphic to the trivial line bundle on $\mathbb{S}^{2}$. This
implies in turn that $\det(V_{n,\mathbb{C}})$ is the trivial line
bundle on $\mathbb{S}_{\mathbb{C}}^{2}$, a fact which also follows
more concretely from the observation made in Remark \ref{rem:Swan-Dual}
that $V_{n,\mathbb{C}}$ is isomorphic to the direct sum of a line
bundle and its dual. Since by a general result of Murthy \cite{Mu69},
every algebraic vector bundle of rank $2$ on $\mathbb{S}_{\mathbb{C}}^{2}$
splits a trivial factor, hence is isomorphic to the direct sum of
its determinant and a trivial line bundle, we conclude that for every
$n\geq0$, $V_{n,\mathbb{C}}$ is isomorphic to the trivial vector
bundle of rank $2$ on $\mathbb{S}_{\mathbb{C}}^{2}$ (see \S \ref{subsec:Explicit-RealStruct}
below for the construction of explicit isomorphisms $V_{n,\mathbb{C}}\simeq\mathbb{S}_{\mathbb{C}}^{2}\times\mathbb{A}_{\mathbb{C}}^{2}$). 
\end{proof}
By $\S$ \ref{subsec:TopVB-S2} and Lemma \ref{lem:algebraic-model-rk2},
the algebraic vectors bundles $q_{n}:V_{n}\rightarrow\mathbb{S}^{2}$,
$n\geq0$, are pairwise non-isomorphic as vector bundles over $\mathbb{S}^{2}$.
The following result then implies the stronger fact that their total
spaces are pairwise non isomorphic as abstract algebraic varieties: 
\begin{lem}
\label{lem:non-iso-total-spaces} The total spaces of two algebraic
vector bundles $q:V\rightarrow\mathbb{S}^{2}$ and $q':V'\rightarrow\mathbb{S}^{2}$
are isomorphic as abstract real algebraic varieties if and only if
$q:V\rightarrow\mathbb{S}^{2}$ and $q':V'\rightarrow\mathbb{S}^{2}$
are isomorphic as vector bundles. 
\end{lem}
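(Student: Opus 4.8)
The implication from a vector bundle isomorphism to an isomorphism of total spaces is trivial, so the content of the statement is the converse. The plan is to transport the problem to the differential topology of the real loci and to extract the isomorphism class of the bundle from the behaviour of its total space near infinity. Suppose $\Phi\colon V\stackrel{\sim}{\rightarrow}V'$ is an isomorphism of real algebraic varieties. Since $V$ and $V'$ are smooth, $\Phi$ induces a diffeomorphism $\Phi(\mathbb{R})\colon V(\mathbb{R})\stackrel{\sim}{\rightarrow}V'(\mathbb{R})$ of the real loci for the Euclidean topologies. By Proposition~\ref{prop:Alg-equal-Top} it then suffices to prove that the topological real vector bundles $q(\mathbb{R})\colon V(\mathbb{R})\to\mathbb{S}^{2}(\mathbb{R})\simeq S^{2}$ and $q'(\mathbb{R})\colon V'(\mathbb{R})\to S^{2}$ are isomorphic as bundles. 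In other words, everything reduces to the purely topological assertion that two real vector bundles on $S^{2}$ whose total spaces are diffeomorphic are isomorphic.

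To prove this topological assertion I would recover the bundle from the \emph{end} of its total space. The total space $T$ of a rank $r$ real vector bundle $E\to S^{2}$ is diffeomorphic to the interior of the associated closed disc bundle $D(E)$, a compact manifold with boundary the sphere bundle $S(E)$; consequently $T$ has a single collared end, a neighbourhood of which is diffeomorphic to $S(E)\times(0,1)$. The diffeomorphism type of $S(E)$, and already the fundamental group at infinity $\pi_{1}^{\infty}(T)\simeq\pi_{1}(S(E))$, is therefore an invariant of the diffeomorphism type of $T$. For the rank $2$ bundles of \S\ref{subsec:TopVB-S2}, $S(E_{f_{n}})$ is the circle bundle over $S^{2}$ of Euler number $n$, that is, the lens space whose fundamental group is $\mathbb{Z}/n\mathbb{Z}$ (with the conventions $\mathbb{Z}/0\mathbb{Z}=\mathbb{Z}$ for the trivial bundle and $\mathbb{Z}/1\mathbb{Z}=0$). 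These groups are pairwise non-isomorphic for $n\geq0$, so any diffeomorphism $T\simeq T'$ forces the two rank $2$ bundles to share the same index $n$, hence to be isomorphic as topological bundles; by Proposition~\ref{prop:Alg-equal-Top} the algebraic bundles $V$ and $V'$ are then isomorphic. Bundles of different ranks have total spaces of different dimension and are trivially non-diffeomorphic, while for equal rank $r\geq3$ there are by \S\ref{subsec:TopVB-S2} only the trivial and the unique nontrivial bundle, whose ends are the trivial sphere bundle $S^{2}\times S^{r-1}$ and the nontrivial $S^{r-1}$-bundle over $S^{2}$, distinguished for instance by the homotopy of the end.

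The main obstacle is to make the end invariant genuinely rigorous, that is, to justify that the relevant topology of $S(E)$ is recovered from the open manifold $T$ independently of the chosen disc-bundle model. I would address this by working with the fundamental group at infinity $\pi_{1}^{\infty}$ rather than with the full boundary: $\pi_{1}^{\infty}$ is manifestly a diffeomorphism invariant of $T$, the identification $\pi_{1}^{\infty}(T)\simeq\pi_{1}(S(E))$ follows from the product structure $S(E)\times(0,1)$ of the end together with the fact that $T$ deformation retracts onto its zero section $S^{2}$, and, as computed above, $\pi_{1}^{\infty}$ already separates all the cases that occur. The remaining point, that over $S^{2}$ the isomorphism class of the bundle is determined by this end invariant, is exactly the classification of \S\ref{subsec:TopVB-S2} combined with the correspondence of Proposition~\ref{prop:Alg-equal-Top}.
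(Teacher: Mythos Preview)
Your argument is essentially correct but takes a genuinely different route from the paper. The paper proceeds algebraically: given an abstract isomorphism $\Psi:V\to V'$, it first shows that every morphism $\mathbb{A}^{1}_{\mathbb{R}}\to\mathbb{S}^{2}$ is constant (because $\mathbb{Q}^{2}\setminus\mathbb{S}^{2}$ has no real point), so that $q'\circ\Psi$ is constant along fibres of $q$ over $\mathbb{S}^{2}(\mathbb{R})$, hence everywhere by Zariski density. This produces an automorphism $\psi$ of $\mathbb{S}^{2}$ with $q'\circ\Psi=\psi\circ q$; the Eakin--Heinzer differential trick then upgrades the scheme isomorphism $V\simeq\psi^{*}V'$ to a vector bundle isomorphism, and finally the triviality of the mapping class group of $S^{2}$ up to orientation gives $\psi^{*}V'\simeq V'$ via Proposition~\ref{prop:Alg-equal-Top}. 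Your approach instead passes at once to the real loci and reads off the bundle from the end of the open manifold $V(\mathbb{R})$; for rank~$2$ this is exactly the computation $\pi_{1}^{\infty}\simeq\mathbb{Z}/n\mathbb{Z}$, which the paper in fact records (in homological form) in the Remark immediately following the lemma, but only for the specific bundles $V_{n}$ and only as an a posteriori observation rather than as the proof.

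What each approach buys: the paper's argument is uniform in the rank and stays within algebraic geometry until the very last step, whereas yours is purely differential--topological after the reduction via Proposition~\ref{prop:Alg-equal-Top} and is arguably more elementary in rank~$2$. The weak point of your write-up is the rank $r\geq 3$ case, where ``distinguished by the homotopy of the end'' is asserted but not justified; note that for $r\geq 4$ the two sphere bundles over $S^{2}$ can have the same integral cohomology ring, so $\pi_{1}^{\infty}$ and the cohomology of the end do not suffice. A clean fix is to bypass the end entirely and use $w_{2}$ of the total space: since $V(\mathbb{R})$ retracts onto $S^{2}$ and $TV(\mathbb{R})|_{S^{2}}\simeq TS^{2}\oplus E$ with $w_{2}(TS^{2})=0$, one has $w_{2}(V(\mathbb{R}))=w_{2}(E)\in H^{2}(S^{2};\mathbb{Z}/2)$, which is exactly the invariant separating the two bundles in rank $\geq 3$.
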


\begin{proof}
Let $\Psi:V\rightarrow V'$ be an isomorphism of abstract real algebraic
varieties. First note that every morphism $f:\mathbb{A}_{\mathbb{R}}^{1}\rightarrow\mathbb{S}^{2}$
is constant. Indeed, otherwise, since $\mathbb{S}^{2}$ is affine
hence does not contain complete curves, $f$ would extend to a nonconstant
morphism $\overline{f}:\mathbb{P}_{\mathbb{R}}^{1}\rightarrow\mathbb{Q}^{2}$
mapping the real point $\mathbb{P}_{\mathbb{R}}^{1}\setminus\mathbb{A}_{\mathbb{R}}^{1}$
to a point of $\mathbb{Q}^{2}\setminus\mathbb{S}^{2}$. But this is
impossible since the latter is a conic without real point. The restriction
of $q'\circ\Psi:V\rightarrow V'$ to every fiber of $q$ over a real
point of $\mathbb{S}^{2}$ is thus constant. Since the set of points
$s$ of $\mathbb{S}^{2}$ such that $\dim((q'\circ\Psi)(q^{-1}(s)))=0$
is closed in $\mathbb{S}^{2}$ and $\mathbb{S}^{2}(\mathbb{R})$ is
Zariski dense in $\mathbb{S}^{2}$, it follows that $q'\circ\Psi$
is constant on the fibers of $q$, hence descends to a unique automorphism
$\psi$ of $\mathbb{S}^{2}$ such that $q'\circ\Psi=\psi\circ q$.
This implies in turn that $\Psi$ induces an isomorphism $\tilde{\Psi}:V\rightarrow\tilde{V}=\psi^{*}V'$
of schemes over $\mathbb{S}^{2}$. Now it follows from \cite[Lemma 1.3]{EaH73}
that $p:V\rightarrow\mathbb{S}^{2}$ and $\tilde{p}=p'\circ\tilde{\Psi}:\tilde{V}\rightarrow\mathbb{S}^{2}$
are isomorphic as algebraic vector bundles over $\mathbb{S}^{2}$.
Let us briefly recall the argument for the sake of completeness: since
$V$ and $\tilde{V}$ are vector bundles, their relative tangent bundles
$T_{V/\mathbb{S}^{2}}$ and $T_{\tilde{V}/\mathbb{S}^{2}}$ are isomorphic
to $p^{*}V$ and $\tilde{p}^{*}\tilde{V}$ respectively. Letting $\alpha:\mathbb{S}^{2}\rightarrow V$
be any section of $p$, the composition $\tilde{\alpha}=\tilde{\Psi}\circ\alpha$
is a section of $\tilde{p}$, and the relative differential $d\tilde{\Psi}_{/\mathbb{S}^{2}}:T_{V/\mathbb{S}^{2}}\rightarrow\tilde{\Psi}^{*}T_{\tilde{V}/\mathbb{S}^{2}}$
of $\tilde{\Psi}$ over $\mathbb{S}^{2}$ then induces an isomorphism
\[
\alpha^{*}d\tilde{\Psi}_{/\mathbb{S}^{2}}:V\simeq\alpha^{*}T_{V/\mathbb{S}^{2}}\stackrel{\simeq}{\longrightarrow}\alpha^{*}\tilde{\Psi}^{*}T_{\tilde{V}/\mathbb{S}^{2}}=\tilde{\alpha}^{*}T_{\tilde{V}/\mathbb{S}^{2}}\simeq\tilde{V}
\]
of algebraic vector bundles over $\mathbb{S}^{2}$. To complete the
proof, it thus remains to show that $\psi^{*}V'$ is isomorphic to
$V'$ as algebraic vector bundles over $\mathbb{S}^{2}$. By virtue
of Proposition \ref{prop:Alg-equal-Top}, it suffices to show that
the pull-back of $V'(\mathbb{R})$ by the induced diffeomorphism $\psi(\mathbb{R})$
of $\mathbb{S}^{2}(\mathbb{R})\simeq S^{2}$ is isomorphic to $V'(\mathbb{R})$
as a topological real vector bundle. Since the mapping class group
of $S^{2}$ is isomorphic to $\mathbb{Z}/2\mathbb{Z}$, $\psi(\mathbb{R})^{*}V'(\mathbb{R})$
is either isomorphic to $V'(\mathbb{R})$ if $\psi(\mathbb{R})$ is
orientation preserving, or to $V'(\mathbb{R})$ but endowed with the
opposite orientation otherwise. So in each case $\psi(\mathbb{R})^{*}V'(\mathbb{R})\simeq V'(\mathbb{R})$
and the assertion follows. 
\end{proof}
\begin{rem}
In the special case of the rank $2$ vector bundles $q_{n}:V_{n}\rightarrow\mathbb{S}^{2}$,
$n\geq0$, it is well-known that the associated real topological fourfolds
$V_{n}(\mathbb{R})\simeq\mathcal{O}_{\mathbb{CP}^{1}}(n)$ are actually
even pairwise non-homeomorphic. This can be seen by comparing their
respective first homology groups at infinity $H_{1}^{\infty}(\mathcal{O}_{\mathbb{CP}^{1}}(n);\mathbb{Z})$,
defined as the limit over exhaustions of $\mathcal{O}_{\mathbb{CP}^{1}}(n)$
by compact subsets $K_{i}$ of the homology groups $H_{1}(\mathcal{O}_{\mathbb{CP}^{1}}(n)\setminus K_{i};\mathbb{Z})$.
Since $\mathcal{O}_{\mathbb{CP}^{1}}(n)$ is homeomorphic to the complement
in the Hirzebruch surface $\beta_{n}:\mathbb{F}_{n}=\mathbb{P}(\mathcal{O}_{\mathbb{CP}^{1}}(-n)\oplus\mathcal{O}_{\mathbb{CP}^{1}})\rightarrow\mathbb{CP}^{1}$
of a section $H_{0,n}$ of $\beta_{n}$ with self-intersection $-n$,
which is unique if $n\neq0$, it follows by excision that $H_{1}^{\infty}(\mathcal{O}_{\mathbb{CP}^{1}}(n);\mathbb{Z})$
is isomorphic to the first homology group of a pointed tubular neighborhood
$T_{*}(H_{0,n})$ in $\mathbb{F}_{n}$, i.e. a tubular neighborhood
of $H_{0;n}$ in $\mathbb{F}_{n}$ with $H_{0;n}$ removed from it.
We conclude that 
\[
H_{1}^{\infty}(\mathcal{O}_{\mathbb{CP}^{1}}(n);\mathbb{Z})\simeq H_{1}(T_{*}(H_{0,n});\mathbb{Z})\simeq\mathbb{Z}/\deg\mathcal{N}_{H_{0,n}/\mathbb{F}_{n}}\mathbb{Z}\simeq\mathbb{Z}/n\mathbb{Z}
\]
where $\mathcal{N}_{H_{0,n}/\mathbb{F}_{n}}\simeq\mathcal{O}_{\mathbb{CP}^{1}}(-n)$
denotes the normal bundle of $H_{0,n}$ in $\mathbb{F}_{n}$. 
\end{rem}

For every $n\geq0$, the real algebraic variety $V_{n}\times\mathbb{A}_{\mathbb{R}}^{1}$
is the total space of an algebraic vector bundle $q_{n}\circ\mathrm{pr_{1}:V_{n}\times\mathbb{A}_{\mathbb{R}}^{1}\rightarrow\mathbb{S}^{2}}$
of rank $3$ on $\mathbb{S}^{2}$. By combining the classification
of topological real vector bundles on $S^{2}$ given in \S \ref{subsec:TopVB-S2}
with Proposition \ref{prop:Alg-equal-Top} and Lemma \ref{lem:non-iso-total-spaces},
we obtain the following generalization of Hochster's counter-example
to the Zariski Cancellation Problem \cite{Ho72} which, in our notation,
corresponds to the case of the vector bundle $p_{2}:V_{2}\rightarrow\mathbb{S}^{2}$,
isomorphic to the tangent bundle $T\mathbb{S}^{2}\rightarrow\mathbb{S}^{2}$
of $\mathbb{S}^{2}$. 
\begin{cor}
\label{cor:Stable-Iso}The real algebraic variety $V_{n}\times\mathbb{A}_{\mathbb{R}}^{1}$
is isomorphic to $V_{0}\times\mathbb{A}_{\mathbb{R}}^{1}$ if $n$
is even or to $V_{1}\times\mathbb{A}_{\mathbb{R}}^{1}$ if $n$ is
odd. As a consequence, the real algebraic varieties $V_{2p}$ $($resp.
$V_{2p+1}$$)$, $p\geq0$, form a family of pairwise non isomorphic
rational factorial real algebraic varieties with isomorphic cylinders
$V_{2p}\times\mathbb{A}_{\mathbb{R}}^{1}$ $($resp. $V_{2p+1}\times\mathbb{A}_{\mathbb{R}}^{1}$$)$. 
\end{cor}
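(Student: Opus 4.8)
The plan is to reduce the assertion on cylinders to the stable classification of topological real vector bundles on $S^2$ recalled in \S\ref{subsec:TopVB-S2}, and then to read off the remaining properties of the $V_n$ from the fact that they are total spaces of vector bundles over $\mathbb{S}^2$. Observe first that $V_n\times\mathbb{A}_{\mathbb{R}}^1$, with projection $q_n\circ\mathrm{pr}_1$ to $\mathbb{S}^2$, is the total space of the rank $3$ algebraic vector bundle $V_n\oplus(\mathbb{S}^2\times\mathbb{A}_{\mathbb{R}}^1)\to\mathbb{S}^2$ obtained by adding a trivial line bundle to $q_n$. So the first step is to identify this rank $3$ bundle up to isomorphism of algebraic vector bundles over $\mathbb{S}^2$.

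First I would pass to the associated topological real vector bundle on $S^2\simeq\mathbb{S}^2(\mathbb{R})$. By Lemma \ref{lem:algebraic-model-rk2} together with Notation \ref{nota:Vn} it is $E_{f_n}\oplus\underline{\mathbb{R}}$, the direct sum of $E_{f_n}$ with the trivial real line bundle. According to the stable classification recorded at the end of \S\ref{subsec:TopVB-S2}, this rank $3$ bundle is trivial, hence isomorphic to $E_{f_0}\oplus\underline{\mathbb{R}}$, when $n$ is even, and isomorphic to $E_{f_1}\oplus\underline{\mathbb{R}}$ when $n$ is odd, because for $r\ge 3$ the isomorphism class of a rank $r$ bundle on $S^2$ is detected by its class in $\pi_1(\mathrm{SO}_r)\simeq\mathbb{Z}/2\mathbb{Z}$. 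Applying Proposition \ref{prop:Alg-equal-Top} in rank $3$, these isomorphisms of topological bundles lift to isomorphisms of the algebraic vector bundles $V_n\oplus(\mathbb{S}^2\times\mathbb{A}_{\mathbb{R}}^1)$ and $V_0\oplus(\mathbb{S}^2\times\mathbb{A}_{\mathbb{R}}^1)$ (resp. $V_1\oplus(\mathbb{S}^2\times\mathbb{A}_{\mathbb{R}}^1)$) over $\mathbb{S}^2$. Taking total spaces, which is the trivial direction of Lemma \ref{lem:non-iso-total-spaces}, then yields $V_n\times\mathbb{A}_{\mathbb{R}}^1\simeq V_0\times\mathbb{A}_{\mathbb{R}}^1$ for $n$ even and $V_n\times\mathbb{A}_{\mathbb{R}}^1\simeq V_1\times\mathbb{A}_{\mathbb{R}}^1$ for $n$ odd.

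For the ``as a consequence'' part, the pairwise non-isomorphism of the $V_{2p}$ (resp. of the $V_{2p+1}$) is already contained in Proposition \ref{prop:Real-forms-trivial-bundle}, and the fact that all the cylinders $V_{2p}\times\mathbb{A}_{\mathbb{R}}^1$ (resp. $V_{2p+1}\times\mathbb{A}_{\mathbb{R}}^1$) are mutually isomorphic follows by transitivity from the first part. It then remains to check that each $V_n$ is rational and factorial. Rationality over $\mathbb{R}$ holds because $\mathbb{S}^2$ is a smooth affine quadric with a real point, hence $\mathbb{R}$-rational, and $q_n$ is a Zariski-locally trivial bundle, so $V_n$ is birational to $\mathbb{S}^2\times\mathbb{A}_{\mathbb{R}}^2$. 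Factoriality follows from the fact that $V_n$ is smooth, hence normal, together with the isomorphism $q_n^*\colon\mathrm{Cl}(\mathbb{S}^2)\stackrel{\sim}{\to}\mathrm{Cl}(V_n)$ given by flat pullback along the bundle projection; since $\mathrm{Cl}(\mathbb{S}^2)$ is trivial, so is $\mathrm{Cl}(V_n)$, and a normal Noetherian domain with trivial divisor class group is a unique factorization domain.

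The only genuinely non-formal input is the stable classification of \S\ref{subsec:TopVB-S2}, namely that $E_{f_n}\oplus\underline{\mathbb{R}}$ depends only on the parity of $n$, combined with the algebraic-to-topological dictionary of Proposition \ref{prop:Alg-equal-Top} applied in rank $3$. Everything else, including non-isomorphism, rationality and factoriality, is a formal consequence of properties of vector bundles and of their total spaces established earlier in the text, so I expect no serious obstacle beyond correctly invoking these two ingredients.
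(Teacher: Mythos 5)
Your proposal is correct and follows essentially the same route as the paper, which obtains the corollary by combining the stable classification of topological vector bundles on $S^{2}$ from \S\ref{subsec:TopVB-S2} (the parity statement for $E_{f_{n}}\oplus\underline{\mathbb{R}}$) with Proposition \ref{prop:Alg-equal-Top} applied in rank $3$ and Lemma \ref{lem:non-iso-total-spaces}. Your additional verifications of rationality (via local triviality of $q_{n}$ over the rational surface $\mathbb{S}^{2}$) and factoriality (via triviality of $\mathrm{Cl}(V_{n})\simeq\mathrm{Cl}(\mathbb{S}^{2})$) are correct fillings of details the paper leaves implicit.
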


\section{Examples and applications }

\subsection{\label{subsec:Explicit-RealStruct}Explicit family of non-equivalent
real structures on $\mathbb{S}_{\mathbb{C}}^{2}\times\mathbb{A}_{\mathbb{C}}^{2}$}

By Lemma \ref{lem:trivial-complex}, the complexifications $q_{n,\mathbb{C}}:V_{n,\mathbb{C}}\rightarrow\mathbb{S}_{\mathbb{C}}^{2}$
of the rank $2$ vector bundles $q_{n}:V_{n}\rightarrow\mathbb{S}^{2}$,
$n\geq0$, are all isomorphic to the trivial vector bundle $\mathrm{pr}_{1}:\mathbb{S}_{\mathbb{C}}^{2}\times\mathbb{A}_{\mathbb{C}}^{2}\rightarrow\mathbb{S}_{\mathbb{C}}^{2}$.
In fact, we have the following more explicit description:
\begin{prop}
\label{prop:real-Struct}For $n\geq1$, let $P_{n},Q_{n}\in\mathbb{R}[z]\subset\Gamma(\mathbb{S}_{\mathbb{C}}^{2},\mathcal{O}_{\mathbb{S}_{\mathbb{C}}^{2}})$
be any polynomials such that 
\[
(1+z)^{n}P_{n}(z)+(1-z)^{n}Q_{n}(z)=1.
\]
Then the following hold:

a) The composition of the canonical product real structure $\Sigma_{0}=\sigma_{\mathbb{S}^{2}}\times\sigma_{\mathbb{A}_{\mathbb{R}}^{2}}$
on $\mathbb{S}_{\mathbb{C}}^{2}\times\mathbb{A}_{\mathbb{C}}^{2}$
with the automorphism of the trivial bundle $\mathbb{S}_{\mathbb{C}}^{2}\times\mathbb{A}_{\mathbb{C}}^{2}$
defined by the matrix 
\[
A_{n}=\left(\begin{array}{cc}
(x-iy)^{n}(P_{n}+Q_{n}) & (1-z)^{n}-(1+z)^{n}\\
-(1+z)^{n}P_{n}^{2}+(1-z)^{n}Q_{n}^{2} & -(x+iy)^{n}(P_{n}+Q_{n})
\end{array}\right)\in\mathrm{GL}_{2}(\Gamma(\mathbb{S}_{\mathbb{C}}^{2},\mathcal{O}_{\mathbb{S}_{\mathbb{C}}^{2}}))
\]
defines a real structure $\Sigma_{n}$ on $\mathbb{S}_{\mathbb{C}}^{2}\times\mathbb{A}_{\mathbb{C}}^{2}$. 

b) There exists an isomorphism $\Theta_{n}:V_{n,\mathbb{C}}\stackrel{\simeq}{\longrightarrow}\mathbb{S}_{\mathbb{C}}^{2}\times\mathbb{A}_{\mathbb{C}}^{2}$
of vector bundles over $\mathbb{S}_{\mathbb{C}}^{2}$ such that $\Sigma_{n}\circ\Theta_{n}=\Theta_{n}\circ\sigma_{V_{n}}$,
where $\sigma_{V_{n}}$ denotes the canonical real structure on $V_{n,\mathbb{C}}$. 
\end{prop}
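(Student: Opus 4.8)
The plan is to handle the two assertions separately: reducing (a) to a single matrix identity, and reducing (b) to making the trivialization of Lemma~\ref{lem:trivial-complex} explicit.

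For (a), write $A=\Gamma(\mathbb{S}_{\mathbb{C}}^{2},\mathcal{O}_{\mathbb{S}_{\mathbb{C}}^{2}})$ and let $\Phi_{A_{n}}$ be the automorphism of the trivial bundle acting on the fibres of $\mathrm{pr}_{1}$ by the matrix $A_{n}\in\mathrm{GL}_{2}(A)$, so that $\Sigma_{n}=\Phi_{A_{n}}\circ\Sigma_{0}$. Since $\Sigma_{0}$ is anti-regular and fibrewise linear while $\Phi_{A_{n}}$ is a regular fibrewise-linear automorphism over the identity of $\mathbb{S}_{\mathbb{C}}^{2}$, the composite $\Sigma_{n}$ is automatically an anti-regular, fibrewise-linear lift of $\sigma_{\mathbb{S}^{2}}$; the only point to check is the cocycle relation $\Sigma_{n}^{2}=\mathrm{id}$. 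Using $\Sigma_{0}^{2}=\mathrm{id}$ one finds that $\Sigma_{n}^{2}$ acts on the fibres by the matrix $A_{n}\overline{A_{n}}$, where $\overline{A_{n}}$ is obtained by applying $\sigma_{\mathbb{S}^{2}}$ to the entries of $A_{n}$, i.e.\ by conjugating the complex coefficients so that $x+iy$ and $x-iy$ are exchanged while $z,P_{n},Q_{n}$ are fixed. Thus (a) reduces to the identity $A_{n}\overline{A_{n}}=I_{2}$, which I would verify directly: from $x^{2}+y^{2}=(1-z)(1+z)$ one gets $(x+iy)^{n}(x-iy)^{n}=(1+z)^{n}(1-z)^{n}$, and a short expansion collapses both diagonal entries of $A_{n}\overline{A_{n}}$ to $\bigl((1+z)^{n}P_{n}+(1-z)^{n}Q_{n}\bigr)^{2}=1$ by the defining relation of $P_{n},Q_{n}$, while the two off-diagonal entries cancel term by term.

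For (b), the plan is to realise $\Theta_{n}$ concretely on modules of sections. By Remark~\ref{rem:Swan-Dual} the $A$-module of sections of $V_{n,\mathbb{C}}$ is $M=\mathfrak{p}^{n}\oplus\sigma_{\mathbb{S}^{2}}^{*}\mathfrak{p}^{n}$, with $\mathfrak{p}=(x+iy,1-z)$ and $\sigma_{\mathbb{S}^{2}}^{*}\mathfrak{p}^{n}=(x-iy,1-z)^{n}$, and the canonical real structure $\sigma_{V_{n}}$ induces on $M$ the semilinear involution $(a,b)\mapsto(\overline{b},\overline{a})$ exchanging the two summands, whereas $\Sigma_{n}$ acts on sections of the trivial bundle by $f\mapsto A_{n}\overline{f}$. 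First I would record the two ring-theoretic facts that drive everything, namely $\mathfrak{p}^{n}=\bigl((x+iy)^{n},(1-z)^{n}\bigr)$ and $\mathfrak{p}^{n}\cdot\sigma_{\mathbb{S}^{2}}^{*}\mathfrak{p}^{n}=\bigl((1-z)^{n}\bigr)$, both consequences of $(x+iy)(x-iy)=(1-z)(1+z)$ together with the Bézout relation. Using the latter I would exhibit an explicit unimodular element of $M$ and the splitting it determines, thereby trivialising $M$ via the splitting-off-a-trivial-factor argument of Lemma~\ref{lem:trivial-complex}; this produces an explicit free basis $g_{1},g_{2}$ of $M$, hence an isomorphism $\Theta_{n}\colon V_{n,\mathbb{C}}\stackrel{\sim}{\longrightarrow}\mathbb{S}_{\mathbb{C}}^{2}\times\mathbb{A}_{\mathbb{C}}^{2}$. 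Finally I would check that $\sigma_{V_{n}}$ sends $g_{j}$ to $\sum_{i}(A_{n})_{ij}g_{i}$, which is precisely the required intertwining relation $\Sigma_{n}\circ\Theta_{n}=\Theta_{n}\circ\sigma_{V_{n}}$.

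The delicate point, and the step I expect to be the main obstacle, is the \emph{simultaneous} requirement that $g_{1},g_{2}$ both satisfy the intertwining relation and form an honest basis of $M$. The intertwining relation alone is cheap: once first components $a_{1},a_{2}\in\mathfrak{p}^{n}$ are chosen, the identity $A_{n}\overline{A_{n}}=I_{2}$ forces a consistent choice of second components, so there is a whole $\Gamma(\mathbb{S}^{2},\mathcal{O}_{\mathbb{S}^{2}})$-module of solutions. Most of these generate only a proper submodule of $M$ (for instance the naive choice $a_{1}=(x+iy)^{n}$, $a_{2}=(1-z)^{n}$ fails to generate $M$), so one must use the Bézout relation to select \emph{unimodular} generators and then confirm that the resulting matrix is $A_{n}$ on the nose rather than merely a twisted conjugate of it. A less computational route to pure existence is to observe that, by (a) and \S\ref{subsec:Galois-descent-VB}, $\Sigma_{n}$ descends to an algebraic rank $2$ vector bundle $W_{n}\to\mathbb{S}^{2}$ with trivial complexification; one then computes, via Lemma~\ref{lem:Top-Down-Bundle} and the winding of the $(x\pm iy)^{n}$-entries of $A_{n}$, that the oriented topological bundle $W_{n}(\mathbb{R})\to S^{2}$ has Euler number $n$, and invokes Proposition~\ref{prop:Alg-equal-Top} together with \S\ref{subsec:TopVB-S2} to conclude $W_{n}\simeq V_{n}$; complexifying this isomorphism yields a $\Theta_{n}$ with the required intertwining property.
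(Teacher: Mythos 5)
Part (a) of your proposal is correct and coincides with the paper's treatment: the paper also disposes of (a) as ``a direct calculation'', and your reduction to the identity $A_{n}\overline{A_{n}}=I_{2}$, with both diagonal entries collapsing to $\bigl((1+z)^{n}P_{n}+(1-z)^{n}Q_{n}\bigr)^{2}=1$ via $(x+iy)^{n}(x-iy)^{n}=(1+z)^{n}(1-z)^{n}$ and the off-diagonal entries cancelling, is exactly that calculation. The genuine gap is in (b), and you flag it yourself: you never exhibit the basis $g_{1},g_{2}$, and exhibiting it is not a routine finishing step but the entire content of the statement --- $A_{n}$ is precisely the matrix that this construction produces, so ``confirming that the resulting matrix is $A_{n}$ on the nose'' cannot be deferred. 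The paper's proof consists exactly of the construction you are missing: it covers $\mathbb{S}_{\mathbb{C}}^{2}$ by the $\sigma_{\mathbb{S}^{2}}$-invariant charts $U_{\pm}=\{1\pm z\neq0\}$, on which $E_{n}\oplus\sigma_{\mathbb{S}^{2}}^{*}E_{n}$ trivializes with transition matrix $D_{n}=\mathrm{diag}\bigl((x+iy)^{n},(x+iy)^{-n}\bigr)$, uses the B\'ezout relation to factor $D_{n}=M_{n,+}^{-1}M_{n,-}$ with explicit matrices $M_{n,\pm}\in\mathrm{SL}_{2}(\Gamma(U_{\pm},\mathcal{O}_{\mathbb{S}_{\mathbb{C}}^{2}}))$ --- this factorization \emph{is} the global trivialization $\Theta_{n}$, i.e.\ your basis $g_{1},g_{2}$ --- and then checks that the swap-type real structure, given in the local trivializations by the antidiagonal matrices $J_{n,\pm}$ with entries $(1\pm z)^{\mp n}$ and $(1\pm z)^{\pm n}$, is carried by conjugation with $M_{n,\pm}$ precisely to the single global matrix $A_{n}$. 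Your module-theoretic framing ($\mathfrak{p}^{n}=((x+iy)^{n},(1-z)^{n})$, the swap involution $(a,b)\mapsto(\overline{b},\overline{a})$, the unimodularity requirement) is a faithful translation of this setting, but without an analogue of the matrices $M_{n,\pm}$ it remains a plan, not a proof.

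Your fallback topological route --- use (a) and \S\ref{subsec:Galois-descent-VB} to descend $\Sigma_{n}$ to a rank $2$ algebraic vector bundle $W_{n}\rightarrow\mathbb{S}^{2}$, identify $W_{n}(\mathbb{R})$ with $E_{f_{n}}$ topologically, conclude $W_{n}\simeq V_{n}$ from Proposition \ref{prop:Alg-equal-Top}, and complexify the isomorphism --- has the right logical shape, is genuinely different from the paper's argument, and would suffice for pure existence of $\Theta_{n}$. But as written it has two defects. First, Lemma \ref{lem:Top-Down-Bundle} does not apply to $W_{n}$: that lemma computes $E_{\mathbb{R}}(\mathbb{R})$ only for bundles descended from a direct sum $E\oplus\sigma_{X}^{*}E$ equipped with the swap structure, whereas $W_{n}$ is descended from the trivial bundle along $\Sigma_{n}$; knowing that $W_{n}$ is of the former type is essentially what is to be proved. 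Second, the key claim that the fixed-point bundle $\{v=A_{n}\overline{v}\}$ over $S^{2}$ has Euler number $\pm n$ is only asserted (``the winding of the $(x\pm iy)^{n}$-entries''), and verifying it requires trivializing that fixed bundle over the two hemispheres --- which is the same computation as producing $M_{n,\pm}$. So neither of your two routes, as written, closes the gap.
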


\begin{proof}
The fact that $A_{n}$ defines an automorphism $j_{n}$ of the trivial
bundle $\mathbb{S}_{\mathbb{C}}^{2}\times\mathbb{A}_{\mathbb{C}}^{2}$
such that $(j_{n}\circ\Sigma_{0})^{2}=\mathrm{id}_{\mathbb{S}_{\mathbb{C}}^{2}\times\mathbb{A}_{\mathbb{C}}^{2}}$
follows from a direct calculation. To construct the isomorphism $\Theta_{n}$,
we recall from Remark \ref{rem:Swan-Dual} that the vector bundle
$q_{n,\mathbb{C}}:V_{n,\mathbb{C}}\rightarrow\mathbb{S}_{\mathbb{C}}^{2}$
is isomorphic to the direct sum of the line bundle $E_{n}\rightarrow\mathbb{S}_{\mathbb{C}}^{2}$
associated to the locally free sheaf $\mathcal{O}_{\mathbb{S}_{\mathbb{C}}^{2}}(nC)$,
where $C=\left\{ x+iy=1-z=0\right\} $ and of the line bundle $\sigma_{\mathbb{S}^{2}}^{*}E_{n}$
associated to the locally free sheaf $\mathcal{O}_{\mathbb{S}_{\mathbb{C}}^{2}}(n\sigma_{\mathbb{S}^{2}}^{-1}(C))$,
where $\sigma_{\mathbb{S}^{2}}^{-1}(C)=\left\{ x-iy=1-z=0\right\} $.
The canonical real structure $\sigma_{V_{n}}$ on $V_{n,\mathbb{C}}$
coincides via this isomorphism with the natural lift of $\sigma_{\mathbb{S}^{2}}$
to a real structure $\tilde{\sigma}_{n}$ on $E_{n}\oplus\sigma_{\mathbb{S}^{2}}^{*}E_{n}$
defined in \S \ref{subsec:Galois-descent-VB}. 

The surface $\mathbb{S}_{\mathbb{C}}^{2}$ is covered by the $\sigma_{\mathbb{S}^{2}}$-invariant
principal affine open subsets $U_{\pm}=\mathbb{S}_{\mathbb{C}}^{2}\setminus\left\{ 1\pm z\neq0\right\} $.
By definition of $C$ and $\sigma_{\mathbb{S}^{2}}^{-1}(C)$, we have
$C\cap U_{-}=\sigma_{\mathbb{S}^{2}}^{-1}(C)\cap U_{-}=\emptyset$,
whereas $C\cap U_{+}$ and $\sigma_{\mathbb{S}^{2}}^{-1}(C)\cap U_{+}$
are principal divisors due to the relation $\left(1-z\right)=(1+z)^{-1}(x-iy)(x+iy)$
which holds in the coordinate ring of $U_{+}$. The choice of local
equations $\{1,x+iy\}$ and $\{1-z,(1+z)^{-1}(x-iy)\}$ for $C$ and
$\sigma_{\mathbb{S}^{2}}^{-1}(C)$ on $U_{-}$ and $U_{+}$ induces
local trivializations 
\[
\begin{array}{ccc}
\gamma_{n,\pm}:E_{n}\oplus\sigma_{\mathbb{S}^{2}}^{*}E_{n}|_{U_{\pm1}} & \stackrel{\simeq}{\longrightarrow} & U_{\pm}\times\mathrm{Spec}(\mathbb{C}[t_{n,\pm},t_{n,\pm}'])\end{array}
\]
for which the isomorphism $\psi_{n}=\gamma_{n,+}\circ\gamma_{n,-}^{-1}|_{U_{+}\cap U_{-}}$
is given by the matrix 
\[
D_{n}=\left(\begin{array}{cc}
\left(x+iy\right)^{n} & 0\\
0 & \left(x+iy\right)^{-n}
\end{array}\right)\in\mathrm{SL}_{2}(\Gamma(U_{+}\cap U_{-},\mathcal{O}_{\mathbb{S}_{\mathbb{C}}^{2}})).
\]
A direct calculation using the relation $(1+z)^{n}P_{n}(z)+(1-z)^{n}Q_{n}(z)=1$
then shows that $D_{n}=M_{n,+}^{-1}M_{n,-}$, where 
\[
M_{n,+}=\left(\begin{array}{cc}
\left(\frac{x-iy}{1+z}\right)^{n} & (1+z)^{n}\\
-P_{n} & (x+iy)^{n}Q_{n}
\end{array}\right)\quad\textrm{and}\quad M_{n,-}=\left(\begin{array}{cc}
(1-z)^{n} & \left(\frac{x-iy}{1-z}\right)^{n}\\
-(x+iy)^{n}P_{n} & Q_{n}
\end{array}\right)
\]
are elements of $\mathrm{SL}_{2}(\Gamma(U_{+},\mathcal{O}_{\mathbb{S}_{\mathbb{C}}^{2}}))$
and $\mathrm{SL}_{2}(\Gamma(U_{-},\mathcal{O}_{\mathbb{S}_{\mathbb{C}}^{2}}))$
respectively. It follows that the local trivilizations 
\[
M_{n,\pm}\circ\gamma_{n,\pm}:E_{n}\oplus\sigma_{\mathbb{S}^{2}}^{*}E_{n}|_{U_{\pm}}\stackrel{\simeq}{\longrightarrow}U_{\pm}\times\mathbb{A}_{\mathbb{C}}^{2}
\]
glue to global one $\Theta_{n}:E_{n}\oplus\sigma_{\mathbb{S}^{2}}^{*}E_{n}\stackrel{\simeq}{\rightarrow}\mathbb{S}_{\mathbb{C}}^{2}\times\mathbb{A}_{\mathbb{C}}^{2}$. 

With our choice of local generators, the images of the restrictions
of the real structure $\tilde{\sigma}_{n}$ under the local trivializations
$\gamma_{n,\pm}$ are given locally on the open cover $\{U_{\pm}\}$
by the composition of $\sigma_{\mathbb{S}^{2}}\times\sigma_{\mathbb{A}_{\mathbb{R}}^{2}}|_{U_{\pm1}\times\mathbb{A}_{\mathbb{C}}^{2}}$
with the involutions of the trivial bundles $U_{\pm}\times\mathbb{A}_{\mathbb{C}}^{2}$
with respective matrices 
\[
J_{n,\pm}=\left(\begin{array}{cc}
0 & (1\pm z)^{-n}\\
\left(1\pm z\right)^{n} & 0
\end{array}\right).
\]
A direct computation then confirms that the local real structures
given by the compositions 
\[
M_{n,\pm}\circ J_{n,\pm}(\sigma_{\mathbb{S}^{2}}\times\sigma_{\mathbb{A}_{\mathbb{R}}^{2}}|_{U_{\pm1}})\circ M_{n,\pm}^{-1}|_{U_{\pm}\times\mathbb{A}_{\mathbb{C}}^{2}}
\]
 glue to a global one on $\mathbb{S}_{\mathbb{C}}^{2}\times\mathbb{A}_{\mathbb{C}}^{2}$
equal to $\Sigma_{n}$, for which we have by construction $\Theta_{n}\circ\tilde{\sigma}_{n}=\Sigma_{n}\circ\Theta_{n}$. 
\end{proof}
\begin{example}
\label{exa:Cases-1-2}For $n=1$ and $2$, one can choose for instance
$P_{1}=Q_{1}=1/2$, $P_{2}=(2-z)/4$ and $Q_{2}=(2+z)/4$ to obtain
respectively 
\[
A_{1}=\left(\begin{array}{cc}
x-iy & -2z\\
-\frac{1}{2}z & -x-iy
\end{array}\right)\quad\textrm{and}\quad A_{2}=\left(\begin{array}{cc}
\left(x-iy\right)^{2} & -4z\\
\frac{1}{4}z(z^{2}-2) & -(x+iy)^{2}
\end{array}\right).
\]
\end{example}

\begin{cor}
With the notation of Proposition \ref{prop:real-Struct}, the following
hold:

a) The real structures $\Sigma_{n}$, $n\geq0$ on $\mathbb{S}_{\mathbb{C}}^{2}\times\mathbb{A}_{\mathbb{C}}^{2}$
are pairwise non-equivalent, 

b) The real structure $\Sigma_{n}\times\sigma_{\mathbb{A}_{\mathbb{R}}^{1}}$
on $\mathbb{S}_{\mathbb{C}}^{2}\times\mathbb{A}_{\mathbb{C}}^{3}$
is equivalent to $\Sigma_{0}\times\sigma_{\mathbb{A}_{\mathbb{R}}^{1}}=\sigma_{\mathbb{S}^{2}}\times\sigma_{\mathbb{A}_{\mathbb{R}}^{3}}$
 if $n$ is even and to $\Sigma_{1}\times\sigma_{\mathbb{A}_{\mathbb{R}}^{1}}$
if $n$ is odd.
\end{cor}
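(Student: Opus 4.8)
The plan is to deduce both assertions from the correspondence between equivalence classes of real structures on $\mathbb{S}_{\mathbb{C}}^{2}\times\mathbb{A}_{\mathbb{C}}^{2}$ and isomorphism classes of real forms of $\mathbb{S}^{2}\times\mathbb{A}_{\mathbb{R}}^{2}$ established in the Preliminaries via Galois descent. Concretely, Proposition \ref{prop:real-Struct}(b) tells us that the real structure $\Sigma_{n}$ corresponds, through the isomorphism $\Theta_{n}$, to the canonical real structure $\sigma_{V_{n}}$ on $V_{n,\mathbb{C}}$; hence the real form of $\mathbb{S}^{2}\times\mathbb{A}_{\mathbb{R}}^{2}$ associated to $\Sigma_{n}$ is exactly $V_{n}$. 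The whole corollary then becomes a matter of quoting the structural results already proved about the $V_{n}$.

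For part (a), recall that two real structures are equivalent if and only if the associated real algebraic varieties are isomorphic. Since $\Sigma_{n}$ corresponds to $V_{n}$ and $\Sigma_{m}$ to $V_{m}$, non-equivalence of $\Sigma_{n}$ and $\Sigma_{m}$ for $n\neq m$ is precisely the statement that $V_{n}$ and $V_{m}$ are non-isomorphic as real algebraic varieties, which is exactly Proposition \ref{prop:Real-forms-trivial-bundle} (itself resting on Lemma \ref{lem:non-iso-total-spaces}). So part (a) follows immediately once the identification $\Sigma_{n}\leftrightarrow V_{n}$ is recorded. The only care needed is to confirm that $\Sigma_{0}$, as defined, really is the canonical product real structure corresponding to the trivial bundle $V_{0}=\mathbb{S}^{2}\times\mathbb{A}_{\mathbb{R}}^{2}$, which holds by inspection of the construction.

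For part (b), the plan is to apply the same dictionary one dimension up. The real structure $\Sigma_{n}\times\sigma_{\mathbb{A}_{\mathbb{R}}^{1}}$ on $\mathbb{S}_{\mathbb{C}}^{2}\times\mathbb{A}_{\mathbb{C}}^{3}$ corresponds to the real form $V_{n}\times\mathbb{A}_{\mathbb{R}}^{1}$ of $\mathbb{S}^{2}\times\mathbb{A}_{\mathbb{R}}^{3}$; this is because taking the product with the trivial real line $\sigma_{\mathbb{A}_{\mathbb{R}}^{1}}$ on the descent side corresponds to taking the product with $\mathbb{A}_{\mathbb{R}}^{1}$ on the variety side. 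By Corollary \ref{cor:Stable-Iso}, $V_{n}\times\mathbb{A}_{\mathbb{R}}^{1}$ is isomorphic to $V_{0}\times\mathbb{A}_{\mathbb{R}}^{1}$ when $n$ is even and to $V_{1}\times\mathbb{A}_{\mathbb{R}}^{1}$ when $n$ is odd. Translating these isomorphisms of real varieties back into equivalences of real structures yields exactly the stated parity dichotomy, with $\Sigma_{0}\times\sigma_{\mathbb{A}_{\mathbb{R}}^{1}}=\sigma_{\mathbb{S}^{2}}\times\sigma_{\mathbb{A}_{\mathbb{R}}^{3}}$ by definition.

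The only genuinely non-formal point, and thus the step I would treat most carefully, is verifying that the product construction on real structures is compatible with the product construction on real forms—that is, that the real form attached to $\Sigma_{n}\times\sigma_{\mathbb{A}_{\mathbb{R}}^{1}}$ is canonically $(V_{n})\times\mathbb{A}_{\mathbb{R}}^{1}$ rather than merely some variety with that complexification. This is a routine but necessary check that $(-)/\langle\,\cdot\,\rangle$ commutes with products along $c\colon\mathrm{Spec}(\mathbb{C})\to\mathrm{Spec}(\mathbb{R})$, which follows from the functoriality of Galois descent; once granted, everything else is a direct appeal to Proposition \ref{prop:Real-forms-trivial-bundle} and Corollary \ref{cor:Stable-Iso}.
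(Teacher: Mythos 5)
Your proposal is correct and follows essentially the same route as the paper: part (a) is deduced from Proposition \ref{prop:real-Struct} together with Proposition \ref{prop:Real-forms-trivial-bundle} via the identification of $\Sigma_{n}$ with the real form $V_{n}$, and part (b) from the compatibility of products with Galois descent together with Corollary \ref{cor:Stable-Iso}. The compatibility check you flag as the ``non-formal point'' is indeed the only implicit step, and the paper treats it the same way, simply asserting that $\Sigma_{n}\times\sigma_{\mathbb{A}_{\mathbb{R}}^{1}}$ corresponds to $V_{n}\times\mathbb{A}_{\mathbb{R}}^{1}$.
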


\begin{proof}
The first assertion follows from Proposition \ref{prop:Real-forms-trivial-bundle}
and Proposition \ref{prop:real-Struct} since by construction $\mathbb{S}_{\mathbb{C}}^{2}\times\mathbb{A}_{\mathbb{C}}^{2}$
endowed with the real structure $\Sigma_{n}$ corresponds to the algebraic
vector bundle $q_{n}:V_{n}\rightarrow\mathbb{S}^{2}$. Since the variety
$(\mathbb{S}_{\mathbb{C}}^{2}\times\mathbb{A}_{\mathbb{C}}^{2})\times\mathbb{A}_{\mathbb{C}}^{1}$
endowed with the real structure $\Sigma_{n}\times\sigma_{\mathbb{A}_{\mathbb{R}}^{1}}$
corresponds in turn to the algebraic vector bundle $V_{n}\times\mathbb{A}_{\mathbb{R}}^{1}$
on $\mathbb{S}^{2}$, the second assertion follows from Corollary
\ref{cor:Stable-Iso}. 
\end{proof}
\begin{example}
Corresponding to the classical fact that the tangent bundle $T\mathbb{S}^{2}\rightarrow\mathbb{S}^{2}$
is $1$-stably trivial, the real structure $\Sigma_{2}\times\sigma_{\mathbb{A}_{\mathbb{R}}^{1}}$,
defined as the composition of the automorphism $\hat{j}_{2}$ of $\mathbb{S}_{\mathbb{C}}^{2}\times\mathbb{A}_{\mathbb{C}}^{3}$
defined by the matrix 
\[
\hat{A}_{2}=\left(\begin{array}{cc}
A_{2} & 0\\
0 & 1
\end{array}\right)\in\mathrm{GL}_{3}(\Gamma(\mathbb{S}_{\mathbb{C}}^{2},\mathcal{O}_{\mathbb{S}_{\mathbb{C}}^{2}})),
\]
where $A_{2}$ is the matrix in Example \ref{exa:Cases-1-2} with
the canonical real structure $\Sigma_{0}\times\sigma_{\mathbb{A}_{\mathbb{R}}^{1}}$,
is equivalent to $\Sigma_{0}\times\sigma_{\mathbb{A}_{\mathbb{R}}^{1}}$.
By definition, this amounts to the identity $\psi\circ(\Sigma_{2}\times\sigma_{\mathbb{A}_{\mathbb{R}}^{1}})=(\Sigma_{0}\times\sigma_{\mathbb{A}_{\mathbb{R}}^{1}})\circ\psi$
for some automorphism $\psi$ of the trivial bundle $\mathbb{S}_{\mathbb{C}}^{2}\times\mathbb{A}_{\mathbb{C}}^{3}$.
Rewriting this identity in the form 
\[
(\hat{j}_{2}\times\mathrm{id})=\psi^{-1}\circ(\Sigma_{0}\times\sigma_{\mathbb{A}_{\mathbb{R}}^{1}})\circ\psi\circ(\Sigma_{0}\times\sigma_{\mathbb{A}_{\mathbb{R}}^{1}})^{-1},
\]
we see that it holds for instance for the automorphism $\psi$ defined
by the following matrix 
\[
C=\left(\begin{array}{ccc}
\frac{1}{2}y(x+iy)+\frac{i}{4}z^{2} & iz & x\\
-\frac{1}{2}x(x+iy)-\frac{1}{4}z^{2} & z & y\\
\frac{1}{4}z(y-ix) & -(y+ix) & z
\end{array}\right)\in\mathrm{GL}_{3}(\Gamma(\mathbb{S}_{\mathbb{C}}^{2},\mathcal{O}_{\mathbb{S}_{\mathbb{C}}^{2}})).
\]
Indeed, a direct computation shows that $(\Sigma_{0}\times\sigma_{\mathbb{A}_{\mathbb{R}}^{1}})\circ\psi\circ(\Sigma_{0}\times\sigma_{\mathbb{A}_{\mathbb{R}}^{1}})^{-1}$
is defined by the matrix 
\[
\overline{C}=\left(\begin{array}{ccc}
\frac{1}{2}y(x-iy)-\frac{i}{4}z^{2} & -iz & x\\
-\frac{1}{2}x(x-iy)-\frac{1}{4}z^{2} & z & y\\
\frac{1}{4}z(y+ix) & -(y-ix) & z
\end{array}\right)\in\mathrm{GL}_{3}(\Gamma(\mathbb{S}_{\mathbb{C}}^{2},\mathcal{O}_{\mathbb{S}_{\mathbb{C}}^{2}}))
\]
and that $\hat{A}_{2}=C^{-1}\cdot\overline{C}$. 
\end{example}

\subsection{Proof of Theorem \ref{thm:MainTh-2}}
\begin{lem}
\label{lem:Rational-product}For every $n\geq1$, there exists a smooth
rational real affine variety $X$ of dimension $n$ such that $X(\mathbb{R})\neq\emptyset$
and such that $X_{\mathbb{C}}$ is a of log-general type, with trivial
automorphism group $\mathrm{Aut}(X_{\mathbb{C}})$. 
\end{lem}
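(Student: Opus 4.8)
The plan is to realize $X$ as a product $X=C_{1}\times_{\mathbb{R}}\cdots\times_{\mathbb{R}}C_{n}$ of $n$ smooth rational affine curves of log-general type, chosen so that the factors are mutually ``independent'' enough to force the automorphism group of the product to be trivial. Concretely, I would take $C_{i}=\mathbb{P}_{\mathbb{R}}^{1}\setminus S_{i}$, where each $S_{i}\subset\mathbb{P}_{\mathbb{R}}^{1}$ is a general set of $k_{i}\geq5$ real points. Each $C_{i}$ is then a smooth rational affine curve with $C_{i}(\mathbb{R})=\mathbb{P}^{1}(\mathbb{R})\setminus S_{i}(\mathbb{R})\neq\emptyset$, and since $k_{i}\geq3$ one has $\deg(K_{\mathbb{P}^{1}}+S_{i})=k_{i}-2>0$, so the logarithmic Kodaira dimension is $\overline{\kappa}((C_{i})_{\mathbb{C}})=1$. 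Consequently $X$ is smooth, affine, rational over $\mathbb{R}$ (it is birational to $(\mathbb{P}_{\mathbb{R}}^{1})^{n}$), satisfies $X(\mathbb{R})=\prod_{i}C_{i}(\mathbb{R})\neq\emptyset$, and is of log-general type: by the (easy) superadditivity of $\overline{\kappa}$ under products, $\overline{\kappa}(X_{\mathbb{C}})\geq\sum_{i}\overline{\kappa}((C_{i})_{\mathbb{C}})=n=\dim X$, and $\overline{\kappa}\leq\dim$ always, so equality holds.

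The crux is to prove $\mathrm{Aut}(X_{\mathbb{C}})=1$, for which I would isolate the following rigidity statement: if $D_{1},\ldots,D_{n}$ are smooth curves of log-general type with $\mathrm{Aut}(D_{j})=1$ for all $j$ and admitting no non-constant morphism $D_{k}\to D_{j}$ for $k\neq j$, then $\mathrm{Aut}(D_{1}\times\cdots\times D_{n})=1$. Given $\phi\in\mathrm{Aut}(\prod D_{i})$ with projections $\pi_{j}$, I would restrict $g_{j}:=\pi_{j}\circ\phi$ to each coordinate axis $D_{k}\hookrightarrow\prod D_{i}$ obtained by fixing the remaining coordinates. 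For $k\neq j$ this restriction is a morphism $D_{k}\to D_{j}$, hence constant by hypothesis; letting the fixed coordinates vary shows that $g_{j}$ is independent of the $D_{k}$-coordinate for every $k\neq j$, so it factors as $g_{j}=h_{j}\circ\pi_{j}$ for a morphism $h_{j}\colon D_{j}\to D_{j}$. Therefore $\phi=h_{1}\times\cdots\times h_{n}$ acts coordinatewise, and being an automorphism it forces each $h_{j}\in\mathrm{Aut}(D_{j})=1$, whence $\phi=\mathrm{id}$. Here I use that a morphism between two of our affine curves extends to a morphism $\mathbb{P}^{1}\to\mathbb{P}^{1}$, and that $\mathrm{Aut}((C_{i})_{\mathbb{C}})$ is exactly the stabilizer of $S_{i}$ in $\mathrm{PGL}_{2}(\mathbb{C})$.

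It then remains to choose the $S_{i}$ so that the two hypotheses of the rigidity statement hold over $\mathbb{C}$; both are genericity conditions. First, $\mathrm{Aut}((C_{i})_{\mathbb{C}})=\{g\in\mathrm{PGL}_{2}(\mathbb{C}):g(S_{i})=S_{i}\}$ is trivial for a general configuration of $k_{i}\geq5$ points --- this is why I avoid $k_{i}\leq4$, for which a symmetry always persists (for instance $z\mapsto\lambda/z$ stabilizes $\{0,1,\infty,\lambda\}$ for every $\lambda$). Second, a non-constant $\bar{f}\colon\mathbb{P}^{1}\to\mathbb{P}^{1}$ with $\bar{f}(C_{i})\subseteq C_{j}$ satisfies $\bar{f}^{-1}(S_{j})\subseteq S_{i}$, and Riemann--Hurwitz gives $d(k_{j}-2)\leq k_{i}-2$, bounding the degree $d$ and hence leaving only finitely many possible ramification types. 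For each such type the locus of pairs $(S_{i},S_{j})$ admitting a corresponding $\bar{f}$ is a proper constructible subset of the relevant configuration space, so a general tuple $(S_{1},\ldots,S_{n})$ avoids all of them and admits no non-constant morphism between distinct factors (the case $d=1$ being ruled out since general configurations are pairwise projectively inequivalent).

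The main obstacle is to carry out this last step \emph{over $\mathbb{R}$} while keeping the real loci non-empty: I must exhibit a tuple of real point-configurations $S_{i}$ simultaneously avoiding the finitely many proper, $\mathbb{R}$-defined bad loci governing individual symmetries and inter-factor morphisms. This should follow from the Zariski density of real points in each configuration space together with the fact that the bad loci are proper and defined over $\mathbb{R}$; the delicate point requiring care is to check that base change to $\mathbb{C}$ does not introduce automorphisms of a factor or morphisms between factors that are absent for a general real configuration, i.e. that genericity of the real choice already guarantees genericity of the induced complex configuration.
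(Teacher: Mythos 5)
Your construction is correct, but it follows a genuinely different route from the paper. The paper's proof is a single sentence: take $X$ to be the complement in $\mathbb{P}_{\mathbb{R}}^{n}$ of a smooth real hypersurface $D$ of degree $d>n+1$ (non-connected when $n=1$); log-general type comes from ampleness of $K_{\mathbb{P}^{n}}+D$, and triviality of $\mathrm{Aut}(X_{\mathbb{C}})$ from the fact that automorphisms of such a complement extend to automorphisms of the pair $(\mathbb{P}_{\mathbb{C}}^{n},D_{\mathbb{C}})$, i.e.\ to the stabilizer of $D_{\mathbb{C}}$ in $\mathrm{PGL}_{n+1}(\mathbb{C})$. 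You instead take a product of $n$ punctured lines $C_{i}=\mathbb{P}^{1}\setminus S_{i}$ with generic real configurations of $k_{i}\geq5$ points and prove a product-rigidity lemma. What the paper's choice buys is brevity: one rigid ambient pair, no product analysis. What your choice buys is that everything reduces to curves --- Riemann--Hurwitz, a degree bound $d(k_{j}-2)\leq k_{i}-2$, and a dimension count on configuration spaces --- at the price of the extra rigidity lemma and the genericity bookkeeping. Notably, your insistence on genericity is not pedantry but a real necessity that the paper's terse formulation glosses over: read literally, the paper's recipe fails for $n=1$ (three punctures always admit an $S_{3}$ of symmetries, four always admit a Klein four-group, which is exactly why you require $k_{i}\geq5$ \emph{general} points), and in higher dimension special smooth hypersurfaces such as Fermat hypersurfaces have complements with nontrivial automorphisms, so ``smooth'' must implicitly be read as ``general smooth'' there too.

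Two details in your write-up deserve to be nailed down, though both are routine. First, to conclude that each $h_{j}$ lies in $\mathrm{Aut}(D_{j})$ you should apply the same factorization argument to $\phi^{-1}$, obtaining $\phi^{-1}=e_{1}\times\cdots\times e_{n}$, and compose to get $h_{j}\circ e_{j}=e_{j}\circ h_{j}=\mathrm{id}_{D_{j}}$. Second, the properness of the ``bad'' loci for inter-factor morphisms should be justified by the dimension count your Riemann--Hurwitz bound enables: degree-$d$ maps $\mathbb{P}^{1}\to\mathbb{P}^{1}$ form a family of dimension $2d+1$, while the condition $\bar{f}^{-1}(S_{j})\subseteq S_{i}$ pins down at least $d(k_{j}-2)+2$ of the $k_{i}$ points of $S_{i}$, so for $k_{j}\geq5$ the incidence variety has dimension strictly less than that of the configuration space of $S_{i}$; together with Zariski density of real configurations (and replacing each bad locus by its union with its complex conjugate so as to work with loci defined over $\mathbb{R}$), this completes the argument exactly as you indicate.
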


\begin{proof}
Indeed, it suffices to take for $X$ the complement in $\mathbb{P}_{\mathbb{R}}^{n}$
of smooth real hypersurface of degree $d>n+1$ (non-connected in the
case $n=1$). 
\end{proof}
Theorem \ref{thm:MainTh-2} is then a consequence of the following
more precise result: 
\begin{prop}
Let $X$ be a smooth rational real affine variety $X$ as in Lemma
\ref{lem:Rational-product}. Then the rational real affine variety
$\mathbb{S}^{2}\times\mathbb{A}_{\mathbb{R}}^{2}\times X$ has at
least countably infinitely many pairwise distinct real forms.
\end{prop}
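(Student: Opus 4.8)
The plan is to produce, for each $n\geq 0$, a real form of $\mathbb{S}^2\times\mathbb{A}^2_{\mathbb{R}}\times X$ by taking the product $V_n\times X$, and then to show that these are pairwise non-isomorphic as abstract real algebraic varieties. First I would observe that each $V_n\times X$ is a real form: by Lemma \ref{lem:trivial-complex} we have $V_{n,\mathbb{C}}\simeq\mathbb{S}^2_{\mathbb{C}}\times\mathbb{A}^2_{\mathbb{C}}$, so $(V_n\times X)_{\mathbb{C}}\simeq V_{n,\mathbb{C}}\times X_{\mathbb{C}}\simeq\mathbb{S}^2_{\mathbb{C}}\times\mathbb{A}^2_{\mathbb{C}}\times X_{\mathbb{C}}$, which is the complexification of $\mathbb{S}^2\times\mathbb{A}^2_{\mathbb{R}}\times X$, independently of $n$. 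So it remains to distinguish the real varieties $V_n\times X$.

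The key idea, following Dinh--Oguiso, is to use the log-general type hypothesis on $X$ to force any isomorphism $\Psi:V_n\times X\to V_m\times X$ to respect the product structure, thereby reducing to the already-established fact (Proposition \ref{prop:Real-forms-trivial-bundle}) that the $V_n$ are pairwise non-isomorphic. Concretely, I would pass to complexifications and analyze the geometry of the factors. The factor $V_{n,\mathbb{C}}\simeq\mathbb{S}^2_{\mathbb{C}}\times\mathbb{A}^2_{\mathbb{C}}$ is a rational affine variety carrying many $\mathbb{A}^1$-fibrations and a large automorphism group, whereas $X_{\mathbb{C}}$ is of log-general type with trivial automorphism group. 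The plan is to characterize the projection to $X$ intrinsically: for a variety of log-general type, the Iitaka/log-Kodaira machinery implies that the maximal affine-space-like or ruled directions are confined to the $V_n$ factor, so the $X$-factor is canonically recovered as (the image of) the log-canonical or quasi-Albanese type invariant. An isomorphism $\Psi_{\mathbb{C}}:(V_n\times X)_{\mathbb{C}}\to(V_m\times X)_{\mathbb{C}}$ must then carry the $X_{\mathbb{C}}$-factor to the $X_{\mathbb{C}}$-factor; combined with $\mathrm{Aut}(X_{\mathbb{C}})=\{\mathrm{id}\}$, this gives that $\Psi_{\mathbb{C}}$ descends to an isomorphism $V_{n,\mathbb{C}}\to V_{m,\mathbb{C}}$ over a fixed identification of the $X$-factors, and, being defined over $\mathbb{R}$, yields an isomorphism $V_n\to V_m$ of real varieties.

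The hard part will be establishing this product-rigidity rigorously: one must show that any isomorphism of the complexified products preserves the decomposition into the rational factor and the log-general-type factor. The cleanest route is to invoke an invariant that isolates $X_{\mathbb{C}}$. Since $V_{n,\mathbb{C}}$ is dominated by affine spaces in the sense that it admits a fibration by $\mathbb{A}^1$'s (indeed $\mathbb{S}^2_{\mathbb{C}}\times\mathbb{A}^2_{\mathbb{C}}$ is an $\mathbb{A}^2$-bundle base-changed, and carries nontrivial $\mathbb{G}_a$-actions), whereas $X_{\mathbb{C}}$, being of log-general type, admits no nonconstant morphism from $\mathbb{A}^1$, the set of $\mathbb{A}^1$'s (or images of $\mathbb{G}_a$-orbits) in the product all lie in fibers of the projection to $X_{\mathbb{C}}$; this characterizes that projection up to the automorphisms of the base, which are trivial. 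I would therefore formalize the following: the projection $\mathrm{pr}_X:(V_n\times X)_{\mathbb{C}}\to X_{\mathbb{C}}$ is the unique (up to the trivial $\mathrm{Aut}(X_{\mathbb{C}})$) morphism to a variety of log-general type contracting every image of $\mathbb{A}^1_{\mathbb{C}}$, hence is preserved by $\Psi_{\mathbb{C}}$.

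Once product-rigidity is in hand, I would conclude as follows. The induced map on fibers gives, for a general point of $X_{\mathbb{C}}$, an isomorphism $V_{n,\mathbb{C}}\to V_{m,\mathbb{C}}$; more importantly, at the level of real varieties, $\Psi$ descends to an isomorphism $V_n\to V_m$ of real algebraic varieties (using that $\mathrm{Aut}(X_{\mathbb{C}})$ is trivial and defined over $\mathbb{R}$, so the $X$-factor is matched compatibly with the real structures). By Proposition \ref{prop:Real-forms-trivial-bundle} this forces $n=m$. Therefore the varieties $\mathbb{S}^2\times\mathbb{A}^2_{\mathbb{R}}\times X\simeq V_0\times X,\ V_1\times X,\ V_2\times X,\dots$ are pairwise non-isomorphic real forms, giving countably infinitely many distinct real forms and completing the proof.
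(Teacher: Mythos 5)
Your overall strategy is the same as the paper's (complexifications are trivialized by Lemma \ref{lem:trivial-complex}, then product-rigidity plus $\mathrm{Aut}(X_{\mathbb{C}})=\{\mathrm{id}\}$ reduces the problem to distinguishing the $V_n$), but the way you justify the crucial product-rigidity step contains a genuine error. You claim that a variety of log-general type ``admits no nonconstant morphism from $\mathbb{A}^1$'', and you build your intrinsic characterization of $\mathrm{pr}_{X_{\mathbb{C}}}$ (as the contraction of all images of $\mathbb{A}^1$) on that claim. This is false in general: log-general type does not exclude affine lines. For instance, if $Q\subset\mathbb{P}^2$ is a smooth quartic admitting a hyperflex line $L$ (a line meeting $Q$ at a single point with multiplicity $4$, e.g. $Q=\{x^4+y^4=z^4\}$ with $L=\{x=z\}$), then $\mathbb{P}^2\setminus Q$ is of log-general type ($K_{\mathbb{P}^2}+Q$ is ample) yet contains $L\setminus Q\simeq\mathbb{A}^1$. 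Such complements are among the varieties allowed by Lemma \ref{lem:Rational-product} (degree $4>3$; a generic quartic with a hyperflex still has trivial automorphism group), and for such an $X_{\mathbb{C}}$ your characterization collapses: the curve $\{pt\}\times\mathbb{A}^1\subset V_{n,\mathbb{C}}\times X_{\mathbb{C}}$ is not contained in a fiber of $\mathrm{pr}_{X_{\mathbb{C}}}$, so the $\mathbb{A}^1$-chain equivalence classes are strictly larger than the fibers and do not recover the projection. The statement that is actually true, and that the paper uses by invoking the Iitaka--Fujita strong cancellation theorem \cite{IiFu77}, is about \emph{families}: a variety of log-general type cannot be dominated by a family of affine lines (affine-uniruledness forces $\overline{\kappa}=-\infty$). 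The cancellation theorem exploits the $\mathbb{A}^1$-ruling of $\mathbb{S}_{\mathbb{C}}^{2}\times\mathbb{A}_{\mathbb{C}}^{2}$ as a family sweeping out the fibers, not individual affine lines, and this is what yields the automorphism $\xi$ of $X_{\mathbb{C}}$ with $\mathrm{pr}_{X_{\mathbb{C}}}\circ h_{\mathbb{C}}=\xi\circ\mathrm{pr}_{X_{\mathbb{C}}}$. So either cite that theorem, as the paper does, or prove the family version; your pointwise version cannot be repaired for all $X$ covered by the statement.

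A second, smaller gap: your concluding step, where the isomorphism ``descends'' to an isomorphism $V_n\to V_m$ of real varieties, is not how one actually concludes, and you never use the hypothesis $X(\mathbb{R})\neq\emptyset$, which is in Lemma \ref{lem:Rational-product} precisely for this purpose. Once $\xi=\mathrm{id}_{X_{\mathbb{C}}}$ forces $h$ to be an isomorphism of schemes \emph{over} $X$, one restricts $h$ over a real point $x\in X(\mathbb{R})$ to obtain an isomorphism of real algebraic varieties $V_n\stackrel{\sim}{\rightarrow}V_m$, whence $n=m$ by Lemma \ref{lem:non-iso-total-spaces} (or by Proposition \ref{prop:Real-forms-trivial-bundle}, as you suggest). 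Restricting over a general complex point would only give $V_{n,\mathbb{C}}\simeq V_{m,\mathbb{C}}$, which is vacuous here since all these complexifications are isomorphic.
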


\begin{proof}
With the notation \ref{nota:Vn}, it suffices to check that the varieties
$V_{n}\times X$, $n\geq0$, are pairwise non isomorphic since their
complexifications are all isomorphic to $\mathbb{S}_{\mathbb{C}}^{2}\times\mathbb{A}_{\mathbb{C}}^{2}\times X_{\mathbb{C}}$
by Lemma \ref{lem:trivial-complex}. So suppose given an isomorphism
of abstract real algebraic varieties $h:V_{n}\times X\rightarrow V_{m}\times X$
for some $n,m\geq0$. The complexification $h_{\mathbb{C}}$ of $h$
is then an automorphism of $\mathbb{S}_{\mathbb{C}}^{2}\times\mathbb{A}_{\mathbb{C}}^{2}\times X_{\mathbb{C}}$.
Since $\mathbb{S}_{\mathbb{C}}^{2}\times\mathbb{A}_{\mathbb{C}}^{2}$
is $\mathbb{A}^{1}$-ruled whereas $X_{\mathbb{C}}$ is of log-general
type by hypothesis, it follows from Iitaka-Fujita strong cancellation
theorem \cite{IiFu77} that there exists a unique automorphism $\xi$
of $X_{\mathbb{C}}$ such that $\mathrm{pr}_{X_{\mathbb{C}}}\circ h_{\mathbb{C}}=\xi\circ\mathrm{pr}_{X_{\mathbb{C}}}$.
By hypothesis, $\xi=\mathrm{id}_{X_{\mathbb{C}}}$ and so, we conclude
that $h$ is actually an isomorphism of schemes over $X$. Since $X(\mathbb{R})\neq\emptyset$,
the restriction of $h$ over any real point $x$ of $X$ is then an
isomorphism of real algebraic varieties $V_{n}\stackrel{\sim}{\rightarrow}V_{m}$,
which implies that $m=n$ by Lemma \ref{lem:non-iso-total-spaces}. 
\end{proof}
\bibliographystyle{amsplain}

\end{document}